\theoremstyle{plain}
\newtheorem{Thm}{Theorem}[section]
\newtheorem{Lem}[Thm]{Lemma}
\newtheorem{Prop}[Thm]{Proposition}
\newtheorem{Cor}[Thm]{Corollary}
\theoremstyle{definition}
\newtheorem{Defi}[Thm]{Definition}
\newtheorem{Rem}[Thm]{Remark}
\tikzstyle{vertex}=[circle, draw, inner sep=0pt, minimum size=6pt] % style
\newcommand{\vertex}{\node[vertex]}
\newcommand{\DDD}{\mathcal{D}} %%%%%
\title{On chordal phylogeny graphs\thanks{This research was supported by
the National Research Foundation of Korea(NRF) funded by the Korea government(MEST) (No.\ NRF-2017R1E1A1A03070489) and by the Korea government(MSIP) (No.\ 2016R1A5A1008055).}}
\date{}
\author[1]{\small Soogang Eoh}
\author[1]{\small Suh-Ryung Kim}
\affil[1]{\footnotesize Department of Mathematics Education, Seoul National University, Seoul 08826}
\affil[ ]{\footnotesize\textit{mathfish@snu.ac.kr, srkim@snu.ac.kr}}
\begin{document}
\maketitle
\begin{abstract}
An acyclic digraph each vertex of which has indegree at most $i$ and outdegree at most $j$ is called an $(i, j)$ digraph for some positive integers $i$ and $j$.
Lee {\it et al.} (2017) studied the phylogeny graphs of $(2, 2)$ digraphs and gave sufficient
conditions and necessary conditions for $(2, 2)$ digraphs having chordal phylogeny graphs.
Their work was motivated by problems related to evidence propagation in a Bayesian network for which it is useful to know which acyclic digraphs have their moral graphs being chordal (phylogeny graphs are called moral graphs in Bayesian network theory).

In this paper, we extend their work.  We  completely characterize phylogeny graphs of $(1, i)$ digraphs and $(i,1)$ digraphs, respectively, for a positive integer $i$.
Then, we study phylogeny graphs of a $(2,j)$ digraphs, which  is worthwhile in the context that a child has two biological parents in most species, to show that the phylogeny graph of a $(2,j)$ digraph $D$ is chordal if the underlying graph of $D$ is chordal for any positive integer $j$.
Especially, we show that as long as the underlying graph of a $(2,2)$ digraph is chordal, its phylogeny graph is not only chordal but also planar.
\end{abstract}
\noindent
{\bf Keywords:} competition graph; moral graph; phylogeny graph; $(i, j)$ digraph; chordal graph; planar graph

\noindent
{\bf 2010 Mathematics Subject Classification:} 05C20, 05C75, 94C15

\section{Introduction}
Throughout this paper, we deal with simple digraphs.

Given an acyclic digraph $D$, the \emph{competition graph} of $D$, denoted by $C(D)$, is the simple graph having vertex set $V(D)$ and edge set $\{uv \mid (u, w), (v, w) \in A(D) \text{ for some } w \in V(D) \}$.
Since Cohen~\cite{cohen1968interval} introduced the notion of competition graphs in the study on predator-prey concepts in ecological food webs, various variants of competition graphs have been introduced and studied.

The notion of phylogeny graphs was introduced by Roberts and Sheng~\cite{roberts1997phylogeny} as a variant of competition graphs. Given an acyclic digraph $D$, the \emph{underlying graph} of $D$, denoted by $U(D)$, is the simple graph with vertex set $V(D)$ and edge set $\{uv \mid (u, v) \in A(D) \text{ or }(v, u) \in A(D) \}$.
The \emph{phylogeny graph} of an acyclic digraph $D$, denoted by $P(D)$, is the graph with the vertex set $V(D)$ and edge set $E(U(D)) \cup E(C(D))$.

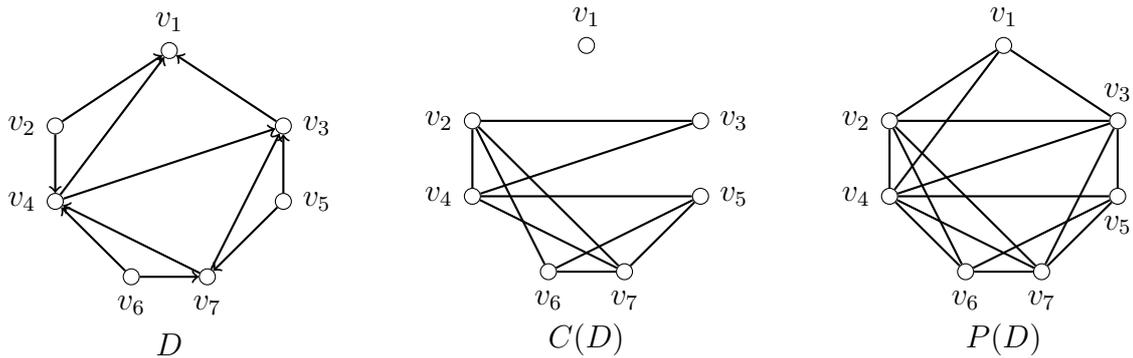
\begin{figure}
\begin{center}
\begin{tikzpicture}[x=1.0cm, y=1.0cm]

    \vertex (b1) at (1.5,3) [label=above:$v_1$]{};
    \vertex (b2) at (0,2) [label=left:$v_2$]{};
    \vertex (b3) at (3,2) [label=right:$v_3$]{};

    \vertex (b4) at (0,1) [label=left:$v_4$]{};
    \vertex (b5) at (3,1) [label=right:$v_5$]{};
    \vertex (b6) at (1,0) [label=below:$v_6$]{};
    \vertex (b7) at (2,0) [label=below:$v_7$]{};
%    \vertex (b8) at (1.5, -0.5) [label=below:$D$]{};

    \path
 (b2) edge [->,thick] (b1)
 (b2) edge [->,thick] (b4)
 (b5) edge [->,thick] (b3)
 (b5) edge [->,thick] (b7)
 (b6) edge [->,thick] (b4)
 (b6) edge [->,thick] (b7)
 (b7) edge [->,,thick] (b3)
 (b7) edge [->,thick] (b4)
 (b4) edge [->,thick] (b1)
 (b4) edge [->,thick] (b3)
 (b3) edge [->,thick] (b1)
;
 \draw (1.5,-0.9) node{$D$};
\end{tikzpicture}
\qquad
\begin{tikzpicture}[x=1.0cm, y=1.0cm]
\vertex (b1) at (1.5,3) [label=above:$v_1$]{};
    \vertex (b2) at (0,2) [label=left:$v_2$]{};
    \vertex (b3) at (3,2) [label=right:$v_3$]{};

    \vertex (b4) at (0,1) [label=left:$v_4$]{};
    \vertex (b5) at (3,1) [label=right:$v_5$]{};
    \vertex (b6) at (1,0) [label=below:$v_6$]{};
    \vertex (b7) at (2,0) [label=below:$v_7$]{};

    \path
 %(b2) edge [-,thick] (b1)
 (b2) edge [-,thick] (b4)
 (b4) edge [-,thick] (b3)
 (b2) edge [-,thick] (b3)
 (b4) edge [-,thick] (b5)
 (b5) edge [-,thick] (b7)
 (b7) edge [-,thick] (b4)
 (b2) edge [-,thick] (b6)
 (b6) edge [-,thick] (b7)
 (b7) edge [-,thick] (b2)
 (b5) edge [-,thick] (b6);

 \draw (1.5,-0.9) node{$C(D)$}
	;
\end{tikzpicture}
%%%%%%
%\begin{tikzpicture}[x=1.2cm, y=1.2cm]
%
%    \vertex (b1) at (1.5,3) [label=above right:$$]{};
%    \vertex (b2) at (0,2) [label=right:$$]{};
%    \vertex (b3) at (3,2) [label=right:$$]{};
%
%    \vertex (b4) at (0,1) [label=above:$$]{};
%    \vertex (b5) at (3,1) [label=right:$$]{};
%    \vertex (b6) at (1,0) [label=below:$$]{};
%    \vertex (b7) at (2,0) [label=below:$$]{};
%
%    \path
% (b2) edge [-,thick] (b1)
% (b2) edge [-,thick] (b4)
% (b5) edge [-,thick] (b3)
% (b5) edge [-,thick] (b7)
% (b6) edge [-,thick] (b4)
% (b6) edge [-,thick] (b7)
% (b7) edge [-,,thick] (b3)
% (b7) edge [-,thick] (b4)
% (b4) edge [-,thick] (b1)
% (b4) edge [-,thick] (b3)
% (b3) edge [-,thick] (b1)
% (b2) edge [-,red,thick] (b3) %(by 1)
% (b2) edge [-,red,thick] (b6) %(by 4)
% (b2) edge [-,red,thick] (b7) %(by 4)
% (b4) edge [-,red,thick] (b5) %(by 3)
% (b5) edge [-,red,thick] (b6) %(by 7)
%	;
%
% \draw (1.5,-0.5) node{$P(D)$}
%	;
%
%
%\end{tikzpicture}
\qquad
\begin{tikzpicture}[x=1.0cm, y=1.0cm]

 \vertex (b1) at (1.5,3) [label=above:$v_1$]{};
    \vertex (b2) at (0,2) [label=left:$v_2$]{};
    \vertex (b3) at (3,2) [label=above:$v_3$]{};

    \vertex (b4) at (0,1) [label=left:$v_4$]{};
    \vertex (b5) at (3,1) [label=below:$v_5$]{};
    \vertex (b6) at (1,0) [label=below:$v_6$]{};
    \vertex (b7) at (2,0) [label=below:$v_7$]{};

    \path
 (b2) edge [-,thick] (b1)
 (b2) edge [-,thick] (b4)
 (b5) edge [-,thick] (b3)
 (b5) edge [-,thick] (b7)
 (b6) edge [-,thick] (b4)
 (b6) edge [-,thick] (b7)
 (b7) edge [-,,thick] (b3)
 (b7) edge [-,thick] (b4)
 (b4) edge [-,thick] (b1)
 (b4) edge [-,thick] (b3)
 (b3) edge [-,thick] (b1)
 (b2) edge [-,thick] (b3) %(by 1)
 (b2) edge [-,thick] (b6) %(by 4)
 (b2) edge [-,thick] (b7) %(by 4)
 (b4) edge [-,thick] (b5) %(by 3)
 (b5) edge [-,thick] (b6) %(by 7)
	;

 \draw (1.5,-0.9) node{$P(D)$}
	;
\end{tikzpicture}
\end{center}
\caption{An acyclic digraph $D$,
 the competition graph $C(D)$ of $D$, and
the phylogeny graph $P(D)$ of $D$.}
\label{fig:counterexample}
\end{figure}

``Moral graphs'' having arisen from studying Bayesian networks are the same as phylogeny graphs.
One of the best-known problems, in the context of Bayesian networks, is related to the propagation of evidence.
It consists of the assignment of probabilities to the values of the rest of the variables, once the values of some variables are known. Cooper~\cite{cooper1990computational} showed that this problem is NP-hard.
Most noteworthy algorithms for this problem are given by Pearl~\cite{pearl1986fusion}, Shachter~\cite{shachter1988probabilistic} and by Lauritzen and Spiegelhalter~\cite{lauritzen1988local}. Those algorithms include a step of triangulating a moral graph, that is, adding edges to a moral graph to form a chordal graph.

A \emph{hole} of a graph is an induced cycle of length at least four.
A graph is said to be \emph{chordal} if it does not contain a hole.
As triangulations of moral graphs play an important role in algorithms for propagation of evidence in a Bayesian network, studying chordality of the phylogeny graphs of acyclic digraphs is meaningful.

For positive integers $i$ and $j$, an \emph{$(i, j)$ digraph} is an acyclic digraph such that each vertex has indegree at most $i$ and outdegree at most $j$.
A graph $G$ is a \emph{phylogeny graph} (resp.\ an \emph{$(i, j)$ phylogeny graph}) if there is an acyclic digraph $D$ (resp.\ an $(i, j)$ digraph $D$) such that $P(D)$ is isomorphic to $G$.
Throughout this paper, we assume that variables $i$ and $j$ belong to the set of positive integers unless otherwise stated.

In this paper, we completely characterize the phylogeny graphs of $(1, j)$ digraphs and those of $(i, 1)$ digraphs (Theorem~\ref{thm:(1,j) realizable} and ~\ref{thm:(i,1) realizable}).
Then we study the phylogeny graphs of $(2, j)$ digraphs.
%We thought that it is worth studying $(2, j)$ digraphs as  a child has two biological parents in most species.
We show that the phylogeny graph of any $(2, j)$ digraph whose underlying graph is chordal is chordal (Theorem~\ref{thm:(2,j) chordal}).
Finally, we show that the phylogeny graph of any $(2, 2)$ digraph whose underlying graph is chordal is chordal and planar (Theorem~\ref{thm:planar}).

\section{$(1, j)$ phylogeny graphs and $(i, 1)$  phylogeny graphs}

In this section, we characterize the $(1, j)$ phylogeny graphs and the $(i, 1)$ phylogeny graphs.

A {\em component} of a digraph $D$ is the subdigraph of $D$ induced by the vertex of a component of its underlying graph.
Given an acyclic digraph $D$, it is easy to check that $D'$ is a component of $D$ if and only if $P(D')$ is a component of $P(D)$.
Thus,
\begin{itemize}
  \item[($\star$)] it is sufficient to consider only weakly connected digraphs (whose underlying graphs are connected) in studying phylogeny graphs of digraphs. \end{itemize}

First we take care of $(1, j)$ phylogeny graphs.

A vertex of degree one is called a \emph{pendant vertex}.

Given a graph $G$ and a vertex $v$ of $G$, we denote the set of neighbors of $v$ in $G$ by $N_G(v)$.
We call $N_G(v) \cup \{v\}$ the \emph{closed neighborhood} of $v$ and denote it by $N_G[v]$.
We call $\Delta(G):=\max\{|N_G(v)| \mid v \in V(G)\}$ the \emph{maximum degree of $G$}.

\begin{Thm}\label{thm:(1,j) realizable}
For a positive integer $j$, a graph is a $(1, j)$ phylogeny graph  if and only if it is a forest with the maximum degree at most $j+1$.
\end{Thm}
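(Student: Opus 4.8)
The plan is to prove both directions directly from the definitions, using the structure of $(1,j)$ digraphs — namely that each vertex has at most one in-neighbor (``parent''), so such a digraph is an in-forest with arcs oriented toward the roots, together with out-degree constraints. For the forward direction, let $D$ be a $(1,j)$ digraph and set $G = P(D)$. First I would observe that $C(D)$ contributes edges only among the out-neighbors of a common vertex; since indegree is at most $1$, for each $w$ the set of vertices $v$ with $(v,w)\in A(D)$ has size at most one, so $C(D)$ has \emph{no} edges at all, whence $P(D) = U(D)$. Thus it suffices to show that the underlying graph of an acyclic $(1,j)$ digraph is a forest of maximum degree at most $j+1$. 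Acyclicity plus indegree $\le 1$ forces $U(D)$ to be acyclic as a graph: a cycle in $U(D)$ would have to use at least one vertex twice as a head of an incoming arc (giving indegree $\ge 2$) unless the cycle is a directed cycle in $D$, contradicting acyclicity. For the degree bound, a vertex $v$ in $U(D)$ is adjacent exactly to its unique parent (if any) and its out-neighbors (at most $j$ of them), so $|N_{U(D)}(v)| \le 1 + j = j+1$.

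For the converse, suppose $G$ is a forest with $\Delta(G)\le j+1$. By $(\star)$ I may assume $G$ is connected, i.e.\ a tree. I would construct an orientation $D$ of $G$ as follows: pick a root $r$, and orient every edge of $G$ \emph{away} from $r$ (so each non-root vertex has exactly one in-arc, from its parent). This $D$ is clearly acyclic with indegree at most $1$. The out-degree of a vertex $v$ is the number of its children in the rooted tree, which is $|N_G(v)|$ if $v = r$ and $|N_G(v)| - 1$ otherwise; in either case this is at most $\Delta(G) \le j+1$. This is slightly off — I need out-degree $\le j$, not $j+1$. The fix is to root the tree \emph{at a leaf}, or more robustly at a vertex of maximum degree: if $r$ has degree $\le j$ we are fine at $r$, and every other vertex has out-degree $|N_G(v)|-1 \le (j+1)-1 = j$. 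Since a tree on at least two vertices has a leaf (degree $1 \le j$), choosing $r$ to be a leaf works whenever $\Delta(G) = j+1$ is actually attained; and if $\Delta(G)\le j$ any root works. Then $D$ is a $(1,j)$ digraph, $C(D)$ is edgeless as above, so $P(D) = U(D) = G$, completing the proof.

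The only real subtlety is this bookkeeping in the converse: matching the out-degree budget $j$ against the degree budget $j+1$ by a careful choice of root, and handling the trivial one-vertex case separately (where the empty digraph on one vertex trivially works). Everything else is an immediate unwinding of the definitions of $C(D)$, $U(D)$, and $P(D)$, together with the elementary fact that indegree at most one plus acyclicity of $D$ makes $U(D)$ a forest.
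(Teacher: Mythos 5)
Your proposal is correct and follows essentially the same route as the paper: in the forward direction you note that indegree at most one makes $C(D)$ edgeless so $P(D)=U(D)$, which must be a forest of maximum degree at most $j+1$; in the converse you root each tree at a pendant vertex and orient all edges away from the root, exactly as the paper does. The only cosmetic wrinkle is your passing suggestion to root at a vertex of maximum degree (which would fail when $\Delta(G)=j+1$), but you correctly settle on a leaf root, so the argument stands.
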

\begin{proof}
By ($\star$), it suffices to show that, for a positive integer $j$, a connected graph is a $(1, j)$ phylogeny graph if and only if it is a tree with the maximum degree at most $j+1$.
To show the ``only if'' part, suppose that a connected graph $G$ is a $(1, j)$ phylogeny graph for some positive integer $j$.
Then there is a $(1, j)$ digraph $D$ such that $P(D)$ is isomorphic to $G$.
Since every vertex of $D$ has indegree at most one, $P(D)=U(D)$.
Since $P(D)$ is connected, $U(D)$ is connected.
Moreover, since $D$ is a $(1, j)$ digraph, $U(D)$ has the maximum degree at most $j+1$.
If $U(D)$ contained a cycle $C$, then there would exist a vertex on $C$ of indegree at least two by the acyclicity of $D$.
Therefore $U(D)$ does not contain a cycle, and so $U(D)$ is tree.

Now we show the ``if'' part.
If $T$ is a tree with one or two vertices, then it is obviously a $(1, 1)$ phylogeny graph.
We take a tree $T$ with at least three vertices and let $j=\Delta(T)-1$.
%We take a tree $T$ with the maximum degree at most $j+1$ for a positive integer $j$.
%Thus we may assume that $T$ is not trivial.
Then there exist pendant vertices. We take one of them and denote it by $u$.
We regard $T$ as a rooted tree with the root $u$ and define an oriented tree $\overrightarrow{T}$, which is acyclic, with $V(\overrightarrow{T})=V(T)$ as follows.
We take an edge $xy$ in $T$.
Then $d_T(u, x)=d_T(u, y)+1$ or $d_T(u, y)=d_T(u, x)+1$.
If the former, $(y, x) \in A(\overrightarrow{T})$ and if the latter, $(x, y) \in A(\overrightarrow{T})$.
By definition, $U(\overrightarrow{T})=T$.
Moreover, $u$ has indegree zero and outdegree one, and each vertex in $\overrightarrow{T}$ except $u$ has indegree one in $\overrightarrow{T}$.
Then, since the degree of each vertex in $T$ is at most $j+1$, the outdegree of each vertex in $\overrightarrow{T}$ is at most $j$.
Therefore $\overrightarrow{T}$ is a $(1, j)$ digraph.
Since each vertex in $\overrightarrow{T}$ has indegree at most one, $P(\overrightarrow{T})=U(\overrightarrow{T})=T$.
\end{proof}

\noindent
If $P(D)$ is triangle-free for an acyclic digraph $D$, then the indegree of each vertex is at most one in $D$, for otherwise, the vertex with indegree at least two form a triangle with two in-neighbors in $P(D)$.
Thus, the following corollary immediately follows from Theorem~\ref{thm:(1,j) realizable}.

\begin{Cor}
For any positive integers $i$ and $j$, if an $(i, j)$ phylogeny graph is triangle-free, then it is a forest with the maximum degree at most $j+1$.
\end{Cor}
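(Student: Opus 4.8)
The plan is to observe that triangle-freeness forces any digraph realizing $G$ to have all indegrees at most one, which collapses the statement onto Theorem~\ref{thm:(1,j) realizable}. Concretely, suppose $G$ is a triangle-free $(i, j)$ phylogeny graph and let $D$ be an $(i, j)$ digraph with $P(D)$ isomorphic to $G$. I would argue by contradiction: if some vertex $w$ of $D$ had two distinct in-neighbors $u$ and $v$, then $uw, vw \in E(U(D)) \subseteq E(P(D))$ and, because $(u, w), (v, w) \in A(D)$, also $uv \in E(C(D)) \subseteq E(P(D))$; since $D$ is a simple digraph we have $u \neq v$, so $\{u, v, w\}$ induces a triangle in $P(D)$, contradicting triangle-freeness of $G$.

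Hence every vertex of $D$ has indegree at most one, so $D$ is in fact a $(1, j)$ digraph and therefore $G$ is a $(1, j)$ phylogeny graph. Theorem~\ref{thm:(1,j) realizable} then immediately yields that $G$ is a forest with maximum degree at most $j+1$, which is exactly the desired conclusion. Note that the value of $i$ plays no role beyond guaranteeing that a realizing digraph $D$ exists in the first place.

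There is essentially no obstacle here: the single point requiring care is that the competition edge $uv$ and the underlying-graph edges $uw$, $vw$ genuinely belong to $P(D)$, which is immediate from the definition $E(P(D)) = E(U(D)) \cup E(C(D))$ together with $u \neq v$. Everything else is a direct appeal to the already-established theorem, so the corollary follows at once.
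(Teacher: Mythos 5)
Your argument is correct and is essentially the paper's own: the paper also notes that a vertex with two in-neighbors would form a triangle with them in $P(D)$, so triangle-freeness forces all indegrees to be at most one, reducing the corollary to Theorem~\ref{thm:(1,j) realizable}. No issues.
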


A \emph{clique} of a graph $G$ is a set $X$ of vertices of $G$ with the property that every pair of distinct vertices in $X$ are adjacent in $G$. A \emph{maximal clique} of a graph $G$ is a clique $X$ of vertices of $G$, such that there is no clique of $G$ that is a proper superset of $X$.
The size of a maximum clique of a graph $G$ is called a \emph{clique number} and denoted by $\omega(G)$.

Given a digraph $D$ with $n$ vertices, a one-to-one correspondence $f:V(D) \to [n]$ is called an \emph{acyclic labeling} of $D$ if $f(u) > f(v)$ for any arc $(u, v)$ in $D$.
It is well-known that $D$ is acyclic if and only if there is an acyclic labeling of $D$.

Given a digraph $D$ and a vertex $v$ of $D$, we denote the set of in-neighbors of $v$ in $D$ by $N_D^-(v)$.
We call $N_D^-(v) \cup \{v\}$ the \emph{closed in-neighborhood} of $v$ and denote it by $N_D^-[v]$.
In addition, for a vertex $v$ in $D$, we call $|N_D^+(v)|$ and $|N_D^-(v)|$ the \emph{outdegree} and the \emph{indegree} of $v$, respectively, and denote it by $d_D^+(v)$ and $d_D^-(v)$, respectively.

Given a graph $G$, a vertex $v$ of $G$ is called a \emph{simplicial vertex} if $N_G[v]$ forms a clique in $G$.

Now we consider phylogeny graphs of $(i, 1)$ digraphs.

\begin{Lem}\label{lem:acyclic labeling}
Let $D$ be a nontrivial weakly connected $(i, 1)$ digraph for some positive integer $i$ and $f$ be an acyclic labeling of $D$.
Then every maximal clique in $P(D)$ is in the form of the closed in-neighborhood of the vertex with the least $f$-value among the vertices in the maximal clique.
%
%If $X$ is a maximal clique in $P(D)$, then
%$X$ is the closed in-neighbor of $x$ for the vertex $x$ with the least $f$-value in $X$.
\end{Lem}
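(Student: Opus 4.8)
The plan is to fix a maximal clique $K$ of $P(D)$, let $x$ be the vertex of $K$ with the smallest $f$-value, and show that $K = N_D^-[x]$. One inclusion is essentially free: for \emph{every} vertex $w$ of $D$, the set $N_D^-[w]$ is a clique of $P(D)$, since $w$ is joined in $U(D)$ to each of its in-neighbors, and any two in-neighbors of $w$ compete at $w$ and hence are adjacent in $C(D)$. So once I have proved $K \subseteq N_D^-[x]$, maximality of $K$ together with the fact that $N_D^-[x]$ is a clique forces $K = N_D^-[x]$, which is exactly the assertion of the lemma.

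To prove $K \subseteq N_D^-[x]$ I would exploit the hypothesis that $d_D^+(v) \le 1$ for every vertex $v$. Pick $y \in K \setminus \{x\}$; since $f(x) < f(y)$ we cannot have $(x,y) \in A(D)$, so the edge $xy$ of $P(D)$ arises either from $(y,x) \in A(D)$ (so $y \in N_D^-(x)$) or from a common out-neighbor $w$ of $x$ and $y$. In the second case $w$ is forced to be \emph{the} unique out-neighbor of $x$, and $f(w) < f(x)$, so $w \notin K$. If $x$ has no out-neighbor at all, the first alternative always occurs and we are done; otherwise let $w$ be the out-neighbor of $x$ and write $K \setminus \{x\} = P \sqcup Q$, where $P = K \cap N_D^-(x)$ and $Q = \{\, y \in K : (y,w) \in A(D),\ (y,x) \notin A(D)\,\}$.

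The heart of the argument is to show $Q = \varnothing$. First I would rule out $P \neq \varnothing$ and $Q \neq \varnothing$ occurring simultaneously: if $z \in P$ and $y \in Q$, then the unique out-neighbor of $z$ is $x$ and that of $y$ is $w$, with $x \neq w$ and $x, w \notin \{y, z\}$ (because $x$ is the $f$-minimum of $K$ and $w \notin K$); hence $z$ and $y$ neither compete nor are joined by an arc, so $zy \notin E(P(D))$, contradicting that $K$ is a clique. Consequently, if $Q \neq \varnothing$ then $P = \varnothing$, and then $K = \{x\} \cup Q \subseteq N_D^-(w)$, so $\{w\} \cup K \subseteq N_D^-[w]$ is a clique of $P(D)$ properly containing $K$ (as $w \notin K$), contradicting the maximality of $K$. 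Therefore $Q = \varnothing$, i.e.\ $K \setminus \{x\} = P \subseteq N_D^-(x)$, which gives $K \subseteq N_D^-[x]$ and finishes the proof.

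I do not expect a serious obstacle: each individual step is immediate from ``outdegree at most one'' together with the acyclic labeling $f$. The only point that needs care is the bookkeeping in the last step, namely checking that $x$, $w$, $y$, $z$ are genuinely distinct so that the non-adjacency of $y$ and $z$ really follows; this is precisely where the minimality of $x$ in $K$ and the inequality $f(w) < f(x)$ are used.
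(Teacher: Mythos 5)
Your proposal is correct and is essentially the paper's argument: both hinge on the facts that $N_D^-[v]$ is always a clique of $P(D)$, that the $f$-minimality of $x$ forbids out-arcs from $x$ into $K$ and forces the unique out-neighbor $w$ of $x$ (if any) to lie outside $K$, and that maximality of $K$ together with the outdegree-one condition then compels every other vertex of $K$ to be an in-neighbor of $x$; your $P$/$Q$ case split is just a repackaging of the paper's single contradiction chain (the paper uses maximality against the clique $N_D^-[w]$ to produce a vertex that would need outdegree two, while you separate this into a non-adjacency contradiction and a larger-clique contradiction). One cosmetic point: in the displayed definition of $Q$ you should quantify $y$ over $K\setminus\{x\}$, as your partition statement already intends, since $x$ itself satisfies $(x,w)\in A(D)$ and $(x,x)\notin A(D)$.
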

\begin{proof}
Let $X$ be a maximal clique in $P(D)$ and $x$ be the vertex having the least $f$-value among the vertices in $X$.
Suppose $X \not\subset N_D^-[x]$.
Then there is a vertex $y \in X$ such that $(y, x) \notin A(D)$.
Since $x$ has the least $f$-value among the vertices in $X$, $(x, y) \notin A(D)$.
Yet, since $x$ and $y$ are adjacent in $P(D)$, they have a common out-neighbor, say $z$, in $D$.
By the hypothesis that $D$ is an $(i, 1)$ digraph, $z$ is the only out-neighbor of $x$ and $y$.
Since $x$ has the least $f$-value among the vertices in $X$, $z \notin X$.
Since $N_D^-[z]$ forms a clique in $P(D)$, $X \not\subset N_D^-[z]$ by the maximality of $X$.
That is, there exists a vertex $w$ in $X$ but not in $N_D^-[z]$.
Then $w \neq z$.
Since $z$ is the unique out-neighbor of $x$ and $y$ in $D$, $(x, w) \notin A(D)$ and $(y, w) \notin A(D)$.
Furthermore, since $w \notin N_D^-[z]$, neither $w$ and $x$ nor $w$ and $y$ have a common out-neighbor in $D$.
However, $w$, $x$, and $y$ belong to $X$, so $(w, x)$ and $(w, y)$ are arcs in $D$, which is a contradiction to the hypothesis that $D$ is a $(i, 1)$ digraph.
Hence $X \subset N_D^-[x]$.
Since $N_D^-[x]$ is a clique in $P(D)$, $X = N_D^-[x]$ by the maximality of $X$.
\end{proof}

\begin{Lem}\label{lem:maximalclique}
Given a nontrivial weakly connected $(i, 1)$ digraph $D$ for a positive integer $i$, the set of all the maximal cliques in $P(D)$ is exactly the set
\[
\{N_D^-[u] \mid u \in V(D) \text{ and } d_D^-(u) \ge 1\}.
\]
\end{Lem}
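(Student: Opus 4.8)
The plan is to prove the two set-inclusions separately, using Lemma~\ref{lem:acyclic labeling} for one direction and a direct argument for the other. Fix an acyclic labeling $f$ of $D$.

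\medskip
\noindent\textbf{Every maximal clique is of the stated form.}
Let $X$ be a maximal clique in $P(D)$. By Lemma~\ref{lem:acyclic labeling}, $X = N_D^-[x]$ where $x$ is the vertex of least $f$-value in $X$. It remains to check that $d_D^-(x) \ge 1$. Since $D$ is nontrivial and weakly connected, $|V(D)| \ge 2$, so $X = N_D^-[x]$ has at least two vertices: if $X = \{x\}$ were a maximal clique, then $x$ would be isolated in $P(D)$, contradicting that $U(D) \subseteq P(D)$ is connected on $\ge 2$ vertices. Hence $|N_D^-[x]| \ge 2$, i.e.\ $d_D^-(x) \ge 1$, and $X$ lies in the right-hand set.

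\medskip
\noindent\textbf{Every such set is a maximal clique.}
Conversely, take $u \in V(D)$ with $d_D^-(u) \ge 1$. Then $N_D^-[u]$ is a clique in $P(D)$ (its vertices pairwise share the common out-neighbor $u$, or are joined by an arc to $u$), so it is contained in some maximal clique $X$ of $P(D)$. By the first part, $X = N_D^-[x]$ for the vertex $x$ of least $f$-value in $X$. I claim $x = u$. Since $u \in X = N_D^-[x]$, either $u = x$ or $(u,x) \in A(D)$; in the latter case $f(u) > f(x)$, which is impossible because $x$ has the least $f$-value in $X$ and $u \in X$. Hence $u = x$ and $N_D^-[u] = X$ is maximal.

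\medskip
\noindent The step I expect to require the most care is confirming $d_D^-(x)\ge 1$ in the forward direction — that is, ruling out a one-element maximal clique — which is where nontriviality and weak connectedness of $D$ are genuinely used; the rest follows by bookkeeping with the acyclic labeling together with Lemma~\ref{lem:acyclic labeling}.
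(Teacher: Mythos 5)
Your forward inclusion is fine and is essentially the paper's argument (Lemma~\ref{lem:acyclic labeling} plus the observation that a nontrivial weakly connected $D$ forces $|X|\ge 2$). The reverse inclusion, however, has a genuine gap at the step where you claim $x=u$. You argue that if $(u,x)\in A(D)$ then $f(u)>f(x)$, ``which is impossible because $x$ has the least $f$-value in $X$''---but this is not impossible at all: the minimality of $f(x)$ in $X$ guarantees $f(u)>f(x)$ for \emph{every} $u\in X\setminus\{x\}$, so no contradiction arises and $x=u$ is never established. The tell is that your argument in this direction never uses the outdegree bound, yet without it the inclusion is false: take vertices $a,b,u,x$ with arcs $(a,u),(b,u),(a,x),(b,x),(u,x)$; then $d_D^-(u)\ge 1$ and $N_D^-[u]=\{a,b,u\}$ is a clique in $P(D)$, but it is properly contained in the clique $N_D^-[x]=\{a,b,u,x\}$, so it is not maximal. (This digraph is of course not an $(i,1)$ digraph, which is exactly the point.)

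The paper closes this gap by invoking the outdegree condition directly: suppose $N_D^-[u]$ is properly contained in a maximal clique $X$, which by Lemma~\ref{lem:acyclic labeling} equals $N_D^-[x]$ with $x\neq u$. Since $d_D^-(u)\ge 1$, pick an in-neighbor $v$ of $u$; then $v\in N_D^-[u]\subseteq N_D^-[x]$, and $v\neq x$ (otherwise $(x,u)$ and $(u,x)$ would both be arcs, violating acyclicity), so $(v,x)\in A(D)$ as well. Thus $v$ has the two distinct out-neighbors $u$ and $x$, contradicting that $D$ is an $(i,1)$ digraph. Replacing your ``$x=u$'' paragraph with this argument repairs the proof; the rest can stand as written.
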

\begin{proof}
Let $f$ be an acyclic labeling of $D$.
Take a maximal clique $Y$ in $P(D)$.
By Lemma~\ref{lem:acyclic labeling}, $Y=N_D^-[y]$ for the vertex $y$ having the least $f$-value among the vertices in $Y$.
Since $D$ is nontrivial and weakly connected, $|Y| \ge 2$ and so $y$ has an in-neighbor in $D$, i.e.\ $d_D^-(y) \ge 1$.

To prove a containment in the other direction, take a vertex $u$ of indegree at least one in $D$.
Let $v$ be an in-neighbor of $u$ in $D$.
Suppose, to the contrary, $N_D^-[u]$ is not maximal.
Then there is a maximal clique $X$ properly containing $N_D^-[u]$.
By Lemma~\ref{lem:acyclic labeling}, $X=N_D^-[x]$ for the vertex $x$ with the least $f$-value among the vertices in $X$.
Since $N_D^-[u]$ is properly contained in $N_D^-[x]$, $u \neq x$.
In addition, since $N_D^-[u]$ is included in $N_D^-[x]$, $v$ is also an in-neighbor of $x$ in $D$.
Thus the outdegree of $v$ is at least two, which contradicts the fact that $D$ is an $(i, 1)$ digraph.
Therefore $N_D^-[u]$ forms a maximal clique in $P(D)$ and this completes the proof.
\end{proof}

A \emph{diamond} is a graph obtained from $K_4$ by deleting an edge.
A graph is called \emph{diamond-free} if it does not contain a diamond as an induced subgraph.

\begin{Lem}\label{lem:diamond-free chordal}
The phylogeny graph of a weakly connected $(i, 1)$ digraph for a positive integer $i$ is diamond-free and chordal.
\end{Lem}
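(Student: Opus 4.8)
The plan is to establish the two properties separately, leaning on Lemma~\ref{lem:maximalclique}. The trivial digraph gives $P(D)=K_1$, so assume $D$ is nontrivial. I will use repeatedly that, since $D$ is an $(i,1)$ digraph, every vertex $v$ with positive outdegree has a \emph{unique} out-neighbor, written $v^+$, and that two vertices are adjacent in $C(D)$ exactly when both of their out-neighbors exist and coincide.

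For diamond-freeness, suppose $P(D)$ contained an induced diamond, with $c,d$ its two non-adjacent vertices and $a,b$ the other two. I would extend the cliques $\{a,b,c\}$ and $\{a,b,d\}$ to maximal cliques $M_1$ and $M_2$ of $P(D)$; since $c$ and $d$ are non-adjacent, $M_1\ne M_2$, so Lemma~\ref{lem:maximalclique} gives $M_1=N_D^-[u_1]$ and $M_2=N_D^-[u_2]$ with $u_1\ne u_2$. As $a\in M_1\cap M_2$, if $a\notin\{u_1,u_2\}$ then $(a,u_1),(a,u_2)\in A(D)$, so $a$ would have two out-neighbors --- impossible; hence $a\in\{u_1,u_2\}$, and likewise $b\in\{u_1,u_2\}$, forcing $\{a,b\}=\{u_1,u_2\}$. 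Then $b\in N_D^-[a]$ and $a\in N_D^-[b]$ yield the arcs $(b,a)$ and $(a,b)$, contradicting acyclicity; hence $P(D)$ is diamond-free.

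For chordality, suppose $P(D)$ had a hole $C=v_1v_2\cdots v_kv_1$ with $k\ge4$, fix an acyclic labeling $f$ of $D$, and index so that $v_1$ has the least $f$-value on $C$. Then neither $(v_1,v_2)$ nor $(v_1,v_k)$ is an arc of $D$, so each of the hole-edges $v_1v_2$ and $v_1v_k$ arises either from an arc into $v_1$ or from a common out-neighbor of its ends. If both arose from arcs into $v_1$, then $v_2,v_k\in N_D^-[v_1]$ gives the chord $v_2v_k$; if both arose from common out-neighbors, then $v_2^+=v_1^+=v_k^+$ again gives the chord $v_2v_k$. Hence, after possibly reversing $C$, I may assume $(v_2,v_1)\in A(D)$ while $v_1$ and $v_k$ share an out-neighbor $w=v_1^+=v_k^+$; here $w\notin V(C)$, because $w=v_j$ with $j\in\{2,k\}$ would make the arc $(v_1,v_j)$ violate the minimality of $f(v_1)$, while $3\le j\le k-1$ would make $v_1v_j$ a chord of $C$. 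Now I would show by induction that $v_\ell^+=v_{\ell-1}$ for $2\le\ell\le k$: the base case $\ell=2$ is $(v_2,v_1)\in A(D)$, and for the inductive step the edge $v_\ell v_{\ell+1}$ cannot arise from the arc $(v_\ell,v_{\ell+1})$ because $v_\ell^+=v_{\ell-1}\ne v_{\ell+1}$, nor from a common out-neighbor because that would force $(v_{\ell+1},v_{\ell-1})\in A(D)$ and hence the chord $v_{\ell-1}v_{\ell+1}$, so $(v_{\ell+1},v_\ell)\in A(D)$. Taking $\ell=k-1$ gives $v_k^+=v_{k-1}\in V(C)$, contradicting $v_k^+=w\notin V(C)$; hence $P(D)$ has no hole and is chordal.

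The clique extensions and the bookkeeping at the minimal vertex are routine; the heart of the argument --- and the only place where the $(i,1)$ hypothesis is genuinely used --- is the propagation step, in which outdegree at most $1$ forces the orientation of each successive hole-edge, and this forced orientation, carried all the way around, eventually collides with the out-neighbor $w$. The point most in need of care is the behavior for small $k$ (particularly $k=4$), where vertices at cyclic distance two could be mistaken for cycle-neighbors, so the chord claims should be verified index by index.
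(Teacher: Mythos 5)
Your proof is correct, but it takes a genuinely different route from the paper's. The paper proves this lemma by induction on $|V(D)|$: it picks an indegree-zero vertex $u$ with unique out-neighbor $v$, shows $N_{P(D)}[u]=N_D^-[v]$ and $P(D)-u=P(D-u)$, deduces chordality from the fact that $u$ is simplicial, and then rules out a diamond through $u$ by applying Lemma~\ref{lem:maximalclique} to $D-u$. You avoid induction entirely: diamond-freeness comes directly from Lemma~\ref{lem:maximalclique} (legitimate, since that lemma precedes this one and does not depend on it) together with the outdegree-one restriction forcing the two degree-three vertices of a diamond to be the ``tops'' of the two maximal cliques and hence to carry arcs in both directions; chordality comes from a self-contained orientation-propagation argument around a putative hole, using only acyclicity and outdegree at most one, in which the forced arcs $v_{\ell+1}\to v_\ell$ marching around the hole collide with the common out-neighbor $w$ of $v_1$ and $v_k$. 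Your case analysis at $v_1$ (both in-arcs, both competition edges, or the mixed case up to reversal) is exhaustive, and the chord claims at cyclic distance two are sound for all $k\ge 4$, including $k=4$. What the two approaches buy: the paper's induction packages both properties in one recursion and reuses the identity $P(D)-u=P(D-u)$ elsewhere in spirit, while your argument is more direct, makes explicit exactly where the $(i,1)$ hypothesis is used, and gives a standalone proof of chordality that does not pass through the maximal-clique machinery at all.
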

\begin{proof}
Let $D$ be a weakly connected $(i, 1)$ digraph for a positive integer $i$.
We prove the lemma statement by induction on $|V(D)|$.
If $|V(D)| =1$ or $2$, then the statement is trivially true.
Suppose that $|V(D)|=n+1$ and the lemma statement is true for any weakly connected $(i, 1)$ digraph with $n$ vertices ($n \ge 2$).
Since $D$ is acyclic, there is a vertex $u$ of indegree zero in $D$.
Since $D$ is a weakly connected $(i, 1)$ digraph, $d_D^+(u)=1$.
Thus there is a unique out-neighbor $v$ of $u$ in $D$.
Then, as $u$ has indegree of zero in $D$, we may conclude that, for a vertex $w$ in $D$,  $u$ is adjacent to $w$ in $P(D)$ if and only if $w=v$ or $w$ is an in-neighbor of $v$ in $D$, i.e.\
\begin{equation}\label{eqn:D-u}
N_{P(D)}[u] = N_D^-[v].
\end{equation}
Since the indegree and the outdegree of $u$ are zero and one, respectively, $D-u$ is weakly connected.
Obviously $D-u$ is an $(i, 1)$ digraph.
Thus, by the induction hypothesis, $P(D-u)$ is diamond-free and chordal.
Take two vertices $x$ and $y$ in $V(D) \setminus \{u\}$.
Since $u$ has indegree zero, $u$ cannot be a common out-neighbor of $x$ and $y$. Therefore, $x$ and $y$ are adjacent in $P(D)-u$ if and only if $(x, y) \in A(D)$ or $(y, x) \in A(D)$ or they have a common out-neighbor other than $u$ in $D$  if and only if  $x$ and $y$ are adjacent in $P(D-u)$.
Thus we have shown that $P(D)-u = P(D-u)$.
By \eqref{eqn:D-u}, $u$ is simplicial in $P(D)$, so $P(D)$ is chordal. Now it remains to show that $P(D)$ is diamond-free.

Suppose that $P(D)$ has a diamond. Then, since $P(D)-u$ is diamond-free, every diamond of $P(D)$ contains $u$ and a vertex which is not adjacent to $u$ in $P(D)$.
Let $z$ be a vertex on a diamond which is not adjacent to $u$ in $P(D)$.
Then $z$ is not contained in $N_D^-[v] \setminus \{u\}$ and is adjacent to two vertices $y_1$ and $y_2$ in $N_D^-[v] \setminus \{u\}$ by \eqref{eqn:D-u}.
Since $P(D)-u=P(D-u)$, $z$ is adjacent to $y_1$ and $y_2$ in $P(D-u)$.
Moreover, $u$ is not a pendant vertex, so $v$ has an in-neighbor distinct from $u$ in $D$.
Then $v$ has indegree at least one in $D-u$, so $N_{D-u}^-[v]$ is a maximal clique in $P(D-u)$ by Lemma~\ref{lem:maximalclique}.
Obviously $N_{D-u}^-[v]=N_D^-[v] \setminus \{u\}$, so $N_D^-[v] \setminus \{u\}$ is a maximal clique in $P(D-u)$.
Then, since $z$ belongs to $P(D-u)$ and is not contained in $N_D^-[v] \setminus \{u\}$, there exist a vertex $w$ in $N^-_{D-u}[v]$ which is not adjacent to $z$ in $P(D-u)$.
Then the subgraph induced by $z$, $w$, $y_1$, and $y_2$ is a diamond in $P(D-u)$ and we have reached a contradiction.
%If $u$ is {\color{red}the only} in-neighbor of $v$ in $D$, then {\color{red} $N_D^-[u]=\{u,v\}$ and so} $u$ is a pendant vertex of $P(D)$ by \eqref{eqn:D-u}{\color{red}, which implies that} $P(D)$ is {\color{red}also}  diamond-free.
%Suppose that there is {\color{red}an} in-neighbor of $v$ {\color{red}other than $u$} in $D$.
%Then $v$ has indegree at least one in $D-u$.
%{\color{red}Thus, by} Lemma~\ref{lem:maximalclique}, $N_{D-u}^-[v]$ {\color{red}is} a maximal clique in $P(D-u)$.
%%We recall that $P(D)-u = P(D-u)$.
%Suppose that $P(D)$ has a diamond.
%Then it contains $u$ and there exists a vertex $z$ in $P(D)-u$ which is adjacent to two vertices in $N_D^-[v] \setminus \{u\}$ by \eqref{eqn:D-u}.
%Since $N^-_{D-u}[v]$ forms a maximal clique in $P(D-u)$, there is a vertex $w$ in $N^-_{D-u}[v]$ which is not adjacent to $z$ in $P(D-u)$.
%However, as $P(D)-u=P(D-u)$, the subgraph induced by $z$, $w$, and those two vertices in $N_D^-[v] \setminus \{u\}$ adjacent to $z$ is a diamond in $P(D-u)$ and we reach a contradiction.
\end{proof}

\begin{Lem}\label{lem:intersection}
Let $D$ be an $(i, 1)$ digraph for a positive integer $i$ and $f$ be an acyclic labeling of $D$.
Suppose that non-disjoint vertex sets $X$ and $Y$ form distinct maximal cliques in $P(D)$, respectively. Then $X$ and $Y$ have exactly one common vertex, namely $v$, and \[f(v)= \min\{f(w) \mid w \in X \} \text{ or }  \min\{f(w) \mid w \in Y\}\]
whereas \[f(v) > \min\{f(w) \mid w \in X \cup Y\}.\]
\end{Lem}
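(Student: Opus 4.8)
The plan is to deduce everything from the structural description of the maximal cliques of $P(D)$ provided by Lemma~\ref{lem:acyclic labeling}. First I would reduce to the case that $D$ is nontrivial and weakly connected: since $X$ and $Y$ are non-disjoint, they lie in a single component $D'$ of $D$; by ($\star$) the maximal cliques of $P(D)$ contained in $V(D')$ coincide with the maximal cliques of $P(D')$; and $D'$ is a nontrivial weakly connected $(i,1)$ digraph (nontrivial because a distinct non-disjoint pair of maximal cliques forces each of them to contain at least two vertices). So assume $D$ itself is nontrivial and weakly connected. By Lemma~\ref{lem:acyclic labeling}, $X = N_D^-[x]$ where $x$ is the vertex of least $f$-value in $X$, and $Y = N_D^-[y]$ where $y$ is the vertex of least $f$-value in $Y$; since $X \neq Y$ we have $x \neq y$.

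The crux is to show $|X \cap Y| = 1$. Any $w \in X \cap Y = N_D^-[x] \cap N_D^-[y]$ satisfies ``$w = x$ or $(w, x) \in A(D)$'' and ``$w = y$ or $(w, y) \in A(D)$''. If $w \notin \{x, y\}$, then $w$ would have two distinct out-neighbours, contradicting $d_D^+(w) \le 1$; hence $X \cap Y \subseteq \{x, y\}$. Furthermore $X \cap Y \neq \{x, y\}$: otherwise $x \in Y$ and $y \in X$ would give, since $x \neq y$, both arcs $(x, y)$ and $(y, x)$, a $2$-cycle contradicting acyclicity. As $X \cap Y \neq \emptyset$ by hypothesis, we conclude $X \cap Y = \{v\}$ for a single vertex $v$, and $v \in \{x, y\}$.

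The two displayed assertions then follow quickly. If $v = x$ then $f(v) = \min\{f(w) \mid w \in X\}$, and if $v = y$ then $f(v) = \min\{f(w) \mid w \in Y\}$; exactly one holds, since $x \neq y$ prevents $v$ from being both. For the strict inequality, suppose $v = x$ (the case $v = y$ is symmetric). Then $v \in Y = N_D^-[y]$ and $v \neq y$, so $(v, y) \in A(D)$, and the defining property of the acyclic labeling $f$ gives $f(v) > f(y)$; since $y \in X \cup Y$, this yields $f(v) > \min\{f(w) \mid w \in X \cup Y\}$.

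I do not expect a genuine obstacle: the statement is essentially a repackaging of Lemma~\ref{lem:acyclic labeling} together with the outdegree-at-most-one and acyclicity constraints. The one point meriting care is the argument that $|X \cap Y| \le 1$: one must not overlook that a common vertex may coincide with $x$ or with $y$, which is precisely why excluding the case $X \cap Y = \{x, y\}$ requires the separate $2$-cycle argument rather than the outdegree bound alone.
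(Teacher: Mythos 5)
Your proof is correct, and it takes a noticeably different route from the paper's at the decisive step. The paper first invokes Lemma~\ref{lem:diamond-free chordal} (the diamond-free, chordal lemma, itself proved by induction) to conclude $|X \cap Y| \le 1$, and only then uses Lemma~\ref{lem:acyclic labeling} together with the outdegree bound to place the common vertex in $\{x,y\}$. You bypass the diamond-free lemma entirely: writing $X = N_D^-[x]$ and $Y = N_D^-[y]$ from Lemma~\ref{lem:acyclic labeling}, you show directly that $X \cap Y \subseteq \{x,y\}$ via the outdegree-at-most-one condition, and then exclude $X \cap Y = \{x,y\}$ by the $2$-cycle (equivalently, $f(x)>f(y)>f(x)$) argument — a case the paper never has to confront because diamond-freeness already caps the intersection at one vertex, and which you rightly flag as the point needing separate care. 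The second half (locating $v \in \{x,y\}$, reading off which minimum $f(v)$ equals, and deriving the strict inequality from the arc $(v,y)$ or $(v,x)$) matches the paper. What your approach buys is a self-contained, more elementary proof that makes Lemma~\ref{lem:intersection} logically independent of Lemma~\ref{lem:diamond-free chordal}; what the paper's buys is brevity, since the diamond-free lemma is needed elsewhere anyway and hands over $|X\cap Y|\le 1$ for free. Your opening reduction to a nontrivial weakly connected component (so that Lemma~\ref{lem:acyclic labeling} applies, with the restricted labeling) is also fine and is the same move the paper makes by citing ($\star$).
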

\begin{proof}
By ($\star$), we may assume that $U(D)$ is connected.
By Lemma~\ref{lem:diamond-free chordal},
$P(D)$ is diamond-free, so  $|X \cap Y|\le 1$.
Then, by the hypothesis that $X$ and $Y$ are non-disjoint vertex sets, $|X \cap Y|=1$.
Let $v$ be the vertex common to $X$ and $Y$.
Since $X$ and $Y$ form maximal cliques, $X=N_D^-[x]$ and $Y=N_D^-[y]$ for the vertices $x$ and $y$ with the smallest $f$-values among the vertices in $X$ and the vertices in $Y$, respectively, by Lemma~\ref{lem:acyclic labeling}.
Since $X$ and $Y$ are distinct, $x \neq y$.
If $v \notin \{x, y\}$, then $x$ and $y$ are two distinct out-neighbors of $v$, which is impossible.
Thus $v \in \{x, y\}$.
Without loss of generality, we may assume $v=x$.
Since $x$ and $y$ have the the smallest $f$-values among the vertices in $X$ and the vertices in $Y$, respectively, $v$ has the least $f$-value among the vertices in $X$ but not among the vertices in $Y$, and the lemma statement is true.
\end{proof}

We shall completely characterize the $(i, 1)$ phylogeny graphs in terms of ``clique graph'' which was introduced by Hamelink~\cite{hamelink1968partial}.

\begin{Defi}
The \emph{clique graph} of a graph $G$, denoted by $K(G)$, is a simple graph such that
\begin{itemize}
  \item every vertex of $K(G)$ represents a maximal clique of $G$;
  \item two vertices of $K(G)$ are adjacent when they share at least one vertex in common in $G$.
\end{itemize}
\end{Defi}

\begin{Thm}\label{thm:(i,1) realizable}
For some positive integer $i$, a graph $G$ is an $(i, 1)$ phylogeny graph  if and only if it is a diamond-free chordal graph with $\omega(G) \le i+1$ and its clique graph is a forest.
\end{Thm}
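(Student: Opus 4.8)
The plan is to prove both directions separately, with the ``only if'' direction being the more substantial one. For the ``only if'' direction, suppose $G$ is an $(i,1)$ phylogeny graph, so $G = P(D)$ for some $(i,1)$ digraph $D$. By ($\star$) we may assume $D$ is weakly connected. Lemma~\ref{lem:diamond-free chordal} already gives that $G$ is diamond-free and chordal. The bound $\omega(G) \le i+1$ should follow from Lemma~\ref{lem:maximalclique}: every maximal clique of $P(D)$ is of the form $N_D^-[u]$ for some vertex $u$ of indegree at least one, and since $D$ is an $(i,1)$ digraph, $|N_D^-[u]| = d_D^-(u) + 1 \le i+1$. It remains to show $K(G)$ is a forest. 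Using Lemma~\ref{lem:maximalclique}, the vertices of $K(G)$ are the sets $N_D^-[u]$ for vertices $u$ with $d_D^-(u) \ge 1$; I would fix an acyclic labeling $f$ of $D$ and argue that a cycle in $K(G)$ would produce, via Lemma~\ref{lem:intersection}, a ``forbidden'' configuration. Concretely, if $N_D^-[u_1], N_D^-[u_2], \ldots, N_D^-[u_k], N_D^-[u_1]$ were a cycle in $K(G)$, then consecutive maximal cliques meet in exactly one vertex; tracking the minimum $f$-values around the cycle (each consecutive intersection vertex is the $f$-minimum of one of the two cliques it lies in, by Lemma~\ref{lem:intersection}), one should be able to find a vertex $v$ that is a common in-neighbor of two distinct $u$'s, or else an out-degree-$2$ vertex, contradicting that $D$ is an $(i,1)$ digraph. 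This counting/extremal argument around the cycle is the part I expect to require the most care.

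For the ``if'' direction, suppose $G$ is a diamond-free chordal graph with $\omega(G) \le i+1$ and $K(G)$ a forest; by ($\star$) we may assume $G$ is connected, so $K(G)$ is a tree (or $G$ is a single vertex or edge, which are trivially handled). The plan is to construct an $(i,1)$ digraph $D$ with $P(D) \cong G$. Since $G$ is chordal and diamond-free, distinct maximal cliques share at most one vertex, and I would exploit the tree structure of $K(G)$: root the tree $K(G)$ at some maximal clique $Q_0$, and process the maximal cliques in a BFS/DFS order. For each maximal clique $Q$, its parent clique (if any) shares exactly one vertex with $Q$, call it the ``port'' vertex $p_Q \in Q$. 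Orient $Q$ by choosing $p_Q$ to have all other vertices of $Q$ as in-neighbors when $Q$ is not the root — that is, add arcs $(w, p_Q)$ for all $w \in Q \setminus \{p_Q\}$ — and for the root $Q_0$ pick any vertex to play this role. One must check: (a) each vertex ends up with indegree at most $i$, which holds because the in-neighbors of a vertex $v$ all lie in the single maximal clique for which $v$ is the port, so $d_D^-(v) \le \omega(G) - 1 \le i$; (b) each vertex has outdegree at most $1$, because a vertex $w$ gets an out-arc only toward the port of a clique containing $w$, and the tree structure of $K(G)$ together with diamond-freeness should force $w$ to be the port of at most\ldots{} here one must be careful, since $w$ may lie in several maximal cliques — the key point is that $w$ is a \emph{non-port} vertex in at most one clique along any root-to-leaf consideration, which needs the tree structure; (c) $D$ is acyclic; and (d) $P(D) = G$, i.e., the added competition edges together with underlying edges recover exactly $E(G)$. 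Point (d) holds because within each clique $Q$ all pairs of non-port vertices become adjacent in $C(D)$ (common out-neighbor $p_Q$) and each non-port vertex is adjacent to $p_Q$ in $U(D)$, so each $Q$ becomes a clique in $P(D)$; conversely no spurious edges appear because two vertices in different maximal cliques have no common out-neighbor and no arc between them.

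The main obstacle I anticipate is getting the orientation in the ``if'' direction to simultaneously respect the outdegree-$\le 1$ constraint and acyclicity while still realizing every maximal clique as a clique of $P(D)$. The subtlety is that a single vertex $v$ may belong to many maximal cliques, and we must designate $v$ as the port (the ``sink'' receiving in-arcs) in all but at most one of them, so that $v$ has at most one out-arc; whether this is possible globally and consistently is exactly where the hypothesis that $K(G)$ is a tree (forest) is essential, and I would prove it by induction on the number of maximal cliques, peeling off a leaf clique $Q$ of $K(G)$, applying the induction hypothesis to the digraph realizing $G$ minus the vertices of $Q$ that lie in no other maximal clique, and then attaching $Q$ via its unique port into the parent. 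Verifying that this peeling keeps the remaining graph chordal, diamond-free, with clique graph a tree, and with the realizing digraph still $(i,1)$, is the bookkeeping I would need to do carefully.
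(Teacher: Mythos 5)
Your ``only if'' half is essentially the paper's own argument: diamond-freeness and chordality come from Lemma~\ref{lem:diamond-free chordal}, the bound $\omega(G)\le i+1$ from Lemma~\ref{lem:acyclic labeling} (or \ref{lem:maximalclique}), and acyclicity of $K(G)$ by fixing an acyclic labeling and tracking the minima of consecutive cliques around a purported cycle via Lemma~\ref{lem:intersection}. The contradiction you anticipate (a vertex that is an in-neighbor of two distinct clique-minima, i.e.\ a vertex of outdegree two) is indeed how that walk closes, the other possibility being a violation of Lemma~\ref{lem:intersection} applied to a non-consecutive pair of cliques; so this half is only incomplete in the sense of being a sketch, not wrong.

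The genuine gap is in the ``if'' half. As stated, your global orientation fails: ``for the root $Q_0$ pick any vertex'' allows the root's port to be the vertex $Q_0$ shares with a child, and then that vertex is the sink of \emph{two} maximal cliques. Concretely, take $i=2$ and $G$ the union of two triangles $\{v,a,b\}$ and $\{v,c,d\}$ glued at $v$ (so $K(G)$ is a single edge and $\omega(G)=3\le i+1$): rooting at $\{v,a,b\}$ with port $v$ produces the arcs $(a,v),(b,v),(c,v),(d,v)$, so $d_D^-(v)=4>i$ and, worse, $ac,ad,bc,bd$ become competition edges, so $P(D)\ne G$. This shows that your justification of (a) --- ``the in-neighbors of $v$ all lie in the single maximal clique for which $v$ is the port'' --- is precisely what has to be proved and is false for an arbitrary root port; your worry in (b) about outdegree is actually the easier half. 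The construction is repairable (root $K(G)$ at a \emph{leaf} clique, choose the root's port to be a vertex not lying in its unique neighboring clique, and treat complete $G$ separately; since $K(G)$ is triangle-free each vertex is then a port of at most one clique and a non-port of at most one clique, arcs run toward the root, and no two cliques share a sink, which rules out spurious competition edges), but that repair is exactly the content you have not supplied. Your fallback induction --- peel a leaf clique $X$ and ``attach via its unique port $u$'' --- leaves the same step open: whether the new arcs may be aimed at $u$ depends on the in- and outdegree of $u$ in the smaller digraph, and if $u$ already has an out-neighbor $w$ one must instead use $w$ or a new sink, after verifying that the clique $N^-[w]$ is $X\setminus\{v\}$ rather than $Y$ so that no unwanted edges appear. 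This is exactly where the paper's proof does its work: it peels a single simplicial vertex $v\in X\setminus\{u\}$, and uses an acyclic labeling together with Lemmas~\ref{lem:maximalclique} and~\ref{lem:intersection} and the fact that $X\setminus\{v\}$ and $Y$ are the only cliques of $G-v$ containing $u$, to decide in two cases whether to add the arc $(u,v)$, $(v,w)$, or $(v,u)$, with the indegree bound coming from $|X|\le i+1$. Without that case analysis (or the root-port repair above), the ``if'' direction is not yet proved.
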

\begin{proof}
By ($\star$), it is sufficient to show that a connected graph $G$ is an $(i, 1)$ phylogeny graph for some positive integer $i$ if and only if it is a diamond-free chordal graph with $\omega(G) \le i+1$ and its clique graph is a tree.
To show the ``only if'' part, suppose that a connected graph $G$ is an $(i, 1)$ phylogeny graph for some positive integer $i$.
Then $G=P(D)$ for some weakly connected $(i, 1)$ digraph $D$.
By Lemma~\ref{lem:acyclic labeling}, $\omega(G) \le i+1$.
In addition, by Lemma~\ref{lem:diamond-free chordal}, $P(D)$ is diamond-free and chordal.
Now we show that the clique graph $K(G)$ is a tree.
As the clique graph of a connected graph is connected, it is sufficient to show that $K(G)$ is acyclic.
Suppose, to the contrary, that $K(G)$ contains a cycle $C:=X_1X_2 \cdots X_rX_1$ for an integer $r \ge 3$ and maximal cliques $X_1$, $\ldots$, $X_r$ of $G$.
Let $f$ be an acyclic labeling of $D$.
We denote by $x_i$ the vertex which has the least $f$-value in $X_i$ for each $i=1$, $2$, $\ldots$, $r$.
By Lemma~\ref{lem:intersection}, $X_1 \cap X_2 = \{x_1\}$ or $X_1 \cap X_2 = \{x_2\}$.
Without loss of generality, we may assume that  $X_1 \cap X_2 = \{x_2\}$ so that $f(x_1) < f(x_2)$.
By Lemma~\ref{lem:intersection} again, $X_2 \cap X_3 = \{x_2\}$ or $X_2 \cap X_3 = \{x_3\}$.
Suppose that $X_2 \cap X_3 = \{x_2\}$. Then $f(x_3) < f(x_2)$ and $x_2 \in X_1 \cap X_3$.
By Lemma~\ref{lem:intersection}, $X_1 \cap X_3 = \{x_2\}$, and either $f(x_2) = f(x_1)$ or $f(x_2) = f(x_3)$, which contradicts the fact that $f(x_1) < f(x_2)$ and $f(x_3) < f(x_2)$.
Thus $X_2 \cap X_3 = \{x_3\}$.
Continuing in this way, we may show that $X_i \cap X_{i+1} = \{x_{i+1}\}$ for each $i \in \{1, 2, \ldots, r-1\}$ and $X_r \cap X_1 = \{x_1\}$.
By Lemma~\ref{lem:acyclic labeling}, $X_i = N_D^-[x_i]$ for each $i \in \{1, 2, \ldots, r\}$.
Therefore $(x_1, x_r) \in A(D)$ and  $(x_{i+1}, x_i) \in A(D)$ for each $i \in \{1, 2, \ldots, r-1\}$.
Thus $x_1 \rightarrow x_r \rightarrow \cdots \rightarrow x_2 \rightarrow x_1$ is a directed cycle in $D$ and we reach a contradiction to the acyclicity of $D$.
Hence $K(G)$ does not contain a cycle and so the ``only if'' part is true.

To show the ``if'' part, suppose that a connected graph $G$ is diamond-free and chordal with $\omega(G) \le i+1$ and that $K(G)$ is a tree for some positive integer $i$.
If $G$ is a complete graph, then it has at most $i+1$ vertices and is obviously an $(i, 1)$ phylogeny graph.
Thus we may assume that $G$ is not a complete graph.
Then $K(G)$ is not a trivial tree.
%(We may assume that $G$ is connected which implies $K(G)$ is a tree.)
We show by induction on $|V(G)|$ that $G$ is an $(i,1)$ phylogeny graph.
Since $G$ is connected and not complete, $|V(G)| \ge 3$.
If $|V(G)|=3$, then $G$ is a path of length two and, by Theorem~\ref{thm:(1,j) realizable}, a $(1, 1)$ phylogeny graph.
Assume that a connected non-complete graph is an $(i, 1)$ phylogeny graph if it is a diamond-free chordal graph with less than $n$ vertices and the cliques of size at most $i+1$ and its clique graph is a tree for $n \ge 4$.
Suppose that $|V(G)|=n$.
Since $K(G)$ is not a trivial tree, it contains a pendant vertex.
Let $X$ be a pendant vertex and $Y$ be the neighbor of $X$ in $K(G)$.
Then $|X \cap Y| \ge 1$. Since $G$ is a connected graph with at least four  vertices, by the maximality of $X$ and $Y$,  $2 \le |X|$ and $2 \le |Y|$.
By Lemma~\ref{lem:intersection}, $X \cap Y=\{u\}$  for some vertex $u$.
Since $|X| \ge 2$, there exists a vertex $v$ in $X \setminus \{u\}$.
Since $K(G)$ does not contain a triangle, $X$ and $Y$ are the only maximal cliques that contain $u$ in $G$ and so
\begin{itemize}
\item[($\dag$)]  $G-v$ does not have a maximal clique containing $u$ other than $X \setminus \{v\}$ (not necessarily maximal) and $Y$.
\end{itemize}
Furthermore, since $X$ is a pendant vertex in $K(G)$, every vertex in $X \setminus \{u\}$ is a simplicial vertex in $G$ and therefore $v$ is a simplicial vertex of $G$.
%(Suppose $X$ contains two non-simplicial vertices in $G$.
%If there is a maximal clique containing two vertices of $X$, then $G$ contains a diamond.
%If there are two maximal cliques, other than $X$, containing a vertex of $X$, then $X$ has at least two neighbors in $K(G)$, and so $X$ is not a pendant vertex.)
Then the closed neighborhood of $v$ in $G$ is $X$. Moreover, it is obvious that $G-v$ is a connected diamond-free chordal graph with $\omega(G-v) \le i+1$ and $K(G-v)$ is a tree.
Therefore, by the induction hypothesis, $G-v$ is an $(i, 1)$ phylogeny graph.
Thus there is an $(i, 1)$ digraph $D^*$ such that $P(D^*) = G-v$.
Let $f^*$ be an acyclic labeling of $D^*$.
%We denote by $y$ the vertex which has the least $f^*$-value in $Y$.

{\it Case 1}. The vertex $u$ has the least $f^*$-value in $Y$. Then, by Lemma~\ref{lem:acyclic labeling}, $Y=N^-_{D^*}[u]$. Consider the case in which $u$ has no out-neighbor in $D^*$.
Then, by Lemma~\ref{lem:intersection}, $X\setminus\{v\} =\{u\}$.
Adding the vertex $v$ and the arc $(u, v)$ to $D^*$ results in an $(i, 1)$ digraph whose phylogeny graph is $G$.
Now consider the case in which $u$ has an out-neighbor $w$ in $D^*$.
Then $f^*(w) < f^*(u)$ and $d_{D^*}^-(w) \ge 1$.
Since  $d_{D^*}^-(w) \ge 1$, $N_{D^*}^-[w]$ forms a maximal clique by Lemma~\ref{lem:maximalclique}.
Since  $f^*(w) < f^*(u)$ and  $f^*(u)$ is the minimum in $Y$, $N_{D^*}^-[w]$ is distinct from $Y$.
Since $N_{D^*}^-[w]$ contains $u$, $N_{D^*}^-[w]=X\setminus\{v\}$ by ($\dag$).
Since $|X| \le i+1$, $|X\setminus\{v\}| \le i$ and so
$d_{D^*}^-(w) \le i-1$.
Adding the vertex $v$ and the arc $(v, w)$ to $D^*$ results in an $(i, 1)$ digraph whose phylogeny graph is $G$.

{\it Case 2}. The vertex $u$ does not have the least $f^*$-value in $Y$.
Then $u$ has the least $f^*$-value in $X \setminus \{v\}$ by  Lemma~\ref{lem:intersection}.
Thus, if $u$ has no in-neighbor in $D^*$, then $X\setminus\{v\} =\{u\}$, and so adding the vertex $v$ and the arc $(v, u)$ to $D^*$ results in an $(i, 1)$ digraph whose phylogeny graph is $G$.
Now consider the case in which $u$ has an in-neighbor $w$ in $D^*$.
Then $d_{D^*}^-(u) \ge 1$, so $N_{D^*}^-[u]$ forms a maximal clique by Lemma~\ref{lem:maximalclique}.
By Lemma~\ref{lem:acyclic labeling}, $u$ has the least $f^*$-value in $N_{D^*}^-[u]$.
Since $u$ does not have the least $f^*$-value in $Y$, $N_{D^*}^-[u]$ is distinct from $Y$.
Since $N_{D^*}^-[u]$ contains $u$, $N_{D^*}^-[u]=X\setminus\{v\}$ by ($\dag$).
Since $|X| \le i+1$, $|X\setminus\{v\}| \le i$ and so $d_{D^*}^-(u) \le i-1$.
Adding the vertex $v$ and the arc $(v, u)$ to $D^*$ results in an $(i, 1)$ digraph whose phylogeny graph is $G$.
\end{proof}

The {\it union} of two graphs $G$ and $H$ is the graph having its vertex set $V(G) \cup V(H)$ and edge set $E(G) \cup E(H)$.
If $V(G) \cap V(H) = \emptyset$, we refer to their union as a \emph{disjoint} union.

\begin{Prop}\label{prop:(1,1)}
For a graph $G$, the following statements are equivalent.
\begin{itemize}
  \item[(i)] $G$ is a $(1, j)$ phylogeny graph and an $(i, 1)$ phylogeny graph for some positive integers $i$ and $j$;
  \item[(ii)] $G$ is a disjoint union of paths;
  \item[(iii)] $G$ is a $(1, 1)$ phylogeny graph.
\end{itemize}
\end{Prop}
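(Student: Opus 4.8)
The plan is to establish the cycle of implications (iii) $\Rightarrow$ (i) $\Rightarrow$ (ii) $\Rightarrow$ (iii), using the two characterizations already obtained, namely Theorem~\ref{thm:(1,j) realizable} and Theorem~\ref{thm:(i,1) realizable}.

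First, (iii) $\Rightarrow$ (i) is immediate: a $(1,1)$ digraph is in particular a $(1,j)$ digraph for every $j \ge 1$ and an $(i,1)$ digraph for every $i \ge 1$, so a $(1,1)$ phylogeny graph witnesses (i), e.g.\ with $i=j=1$. Next, (ii) $\Rightarrow$ (iii): a disjoint union of paths is exactly a forest of maximum degree at most $2$, so applying Theorem~\ref{thm:(1,j) realizable} with $j=1$ shows that such a graph is a $(1,1)$ phylogeny graph. (In fact Theorem~\ref{thm:(1,j) realizable} already shows that (ii) and (iii) are equivalent.)

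The only implication with real content is (i) $\Rightarrow$ (ii). Assume $G$ is simultaneously a $(1,j)$ phylogeny graph and an $(i,1)$ phylogeny graph for some positive integers $i$ and $j$. By Theorem~\ref{thm:(1,j) realizable}, $G$ is a forest; in particular $G$ is triangle-free, so each maximal clique of $G$ is either an edge or an isolated vertex. By Theorem~\ref{thm:(i,1) realizable}, the clique graph $K(G)$ is a forest. I then claim $\Delta(G) \le 2$: if some vertex $v$ of $G$ had three distinct neighbors $a$, $b$, $c$, then $\{v,a\}$, $\{v,b\}$, $\{v,c\}$ would be three maximal cliques of $G$, every two of which share the vertex $v$, hence mutually adjacent in $K(G)$; this triangle in $K(G)$ contradicts $K(G)$ being a forest. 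Therefore $G$ is a forest with $\Delta(G)\le 2$, i.e.\ a disjoint union of paths, which is (ii).

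I do not expect any genuine obstacle. The one point that deserves a moment of care is the description of the maximal cliques of the forest $G$, since it is needed to turn a degree-$3$ vertex into a triangle of $K(G)$; this is exactly where the $(1,j)$-hypothesis (forcing triangle-freeness) enters, the $(i,1)$-hypothesis contributing only through the forest structure of the clique graph. A shorter presentation would prove (i) $\Rightarrow$ (ii) as above and then invoke Theorem~\ref{thm:(1,j) realizable} for the equivalence (ii) $\Leftrightarrow$ (iii), closing the loop with the trivial (iii) $\Rightarrow$ (i).
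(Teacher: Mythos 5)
Your proposal is correct and follows essentially the same route as the paper: (ii) $\Leftrightarrow$ (iii) via Theorem~\ref{thm:(1,j) realizable}, the trivial implication (iii) $\Rightarrow$ (i), and (i) $\Rightarrow$ (ii) by noting that $G$ is a forest (hence every edge is a maximal clique) and that a vertex of degree at least three would create a triangle in $K(G)$, contradicting Theorem~\ref{thm:(i,1) realizable}. No gaps; your write-up just spells out the triangle-freeness point slightly more explicitly than the paper does.
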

\begin{proof}
By Theorems~\ref{thm:(1,j) realizable} and~\ref{thm:(i,1) realizable}, it is immediately true that (ii) is equivalent (iii).
Obviously, (iii) implies (i).
Now we show that (i) implies (ii).
By Theorem~\ref{thm:(1,j) realizable}, $G$ is a forest.
If $G$ has a vertex of degree at least three, then $K(G)$ contains a triangle as each edge in $G$ is a maximal clique, which contradicts Theorem~\ref{thm:(i,1) realizable}.
Therefore each vertex in $G$ has degree at most two and so $G$ is a disjoint union of paths.
\end{proof}

\begin{Rem}
Theorems~\ref{thm:(1,j) realizable} and ~\ref{thm:(i,1) realizable} tell us that an $(i, j)$ phylogeny graph for positive integers $i$ and $j$ with $i=1$ or $j=1$ is diamond-free and chordal.
\end{Rem}

%
%A graph isomorphic to $K_{1,3}$ is a tree and its clique graph is isomorphic to $K_3$.
%Thus it is $(1,j)$ phylogeny realizable but not $(i,1)$ phylogeny realizable.

\section{$(2, j)$ phylogeny graphs}
In this section, we focus on phylogeny graphs of $(2, j)$ digraphs for a positive integer $j$. We thought that it is worth studying them in the context that a child has two biological parents in most species.

For an acyclic digraph $D$, an edge is called a \emph{cared edge} in $P(D)$ if the edge belongs to the competition graph $C(D)$ but not to the $U(D)$.
For a cared edge $xy \in P(D)$, there is a common out-neighbor $v$ of $x$ and $y$ and it is said that $xy$ \emph{is taken care} of by $v$ or that $v$ \emph{takes care} of $xy$.
A vertex in $D$ is called a \emph{caring vertex} if an edge of $P(D)$ is taken care of by the vertex.

For example, the edges $v_2v_3$, $v_2v_6$, $v_2v_7$, $v_4v_5$, and $v_5v_6$ of $P(D)$ in Figure~\ref{fig:counterexample} are cared edges
and the vertices $v_1$, $v_4$, $v_4$, $v_3$, and $v_7$ are vertices taking care of $v_2v_3$, $v_2v_6$, $v_2v_7$, $v_4v_5$, and $v_5v_6$, respectively.

\begin{Prop}\label{prop:not on H}
Suppose that the phylogeny graph of a $(2,j)$ digraph $D$ contains a hole $H$ for a positive integer $j$.
Then no vertex on $H$ takes care of an edge on $H$.
\end{Prop}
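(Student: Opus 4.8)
The plan is to argue by contradiction. Suppose $H = w_1 w_2 \cdots w_k w_1$ is a hole in $P(D)$ (so $k \ge 4$), and suppose some vertex $v$ on $H$ takes care of an edge on $H$. Since $v$ takes care of an edge $w_a w_b$ of $H$, both $w_a$ and $w_b$ are in-neighbors of $v$ in $D$, and because $D$ is a $(2,j)$ digraph, $v$ has indegree exactly two, so $N_D^-(v) = \{w_a, w_b\}$. The key structural fact is then that $w_a$ and $w_b$ are both adjacent to $v$ in $P(D)$ (via $U(D)$), so $v$, $w_a$, $w_b$ are pairwise adjacent in $P(D)$, i.e. they form a triangle all of whose vertices lie on $H$.

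The next step is to exploit that $H$ is an \emph{induced} cycle of length at least four. A triangle among three vertices of an induced cycle $H$ forces those three vertices to be consecutive on $H$ — otherwise two of them would be a chord. So, relabeling, I may assume $v = w_{a}$ is flanked by $w_{a-1}$ and $w_{a+1}$ on $H$ (indices mod $k$), and the edge $w_a w_b$ cared for by $v$ must be one of the two $H$-edges at $v$, say $w_a = w_{m}$ and the two in-neighbors of $v$ are exactly $w_{m-1}$ and $w_{m+1}$, the two $H$-neighbors of $v$. But then $w_{m-1} w_{m+1}$ is an edge of $C(D) \subseteq P(D)$ (they share the common out-neighbor $v$), and $w_{m-1}, w_m, w_{m+1}$ are three consecutive vertices of the hole $H$; hence $w_{m-1} w_{m+1}$ is a chord of $H$, contradicting that $H$ is induced, since $k \ge 4$ guarantees $w_{m-1}$ and $w_{m+1}$ are not the same vertex and not joined by an $H$-edge.

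The one thing to be careful about — and the main (minor) obstacle — is verifying that the edge cared for by $v$ really must be an $H$-edge incident to $v$, rather than some other $H$-edge. This is exactly where the triangle-in-an-induced-cycle observation does the work: the three vertices $v, w_a, w_b$ are pairwise $P(D)$-adjacent and all on $H$, so they are mutually consecutive on $H$; consequently $\{w_a, w_b\}$ are the two $H$-neighbors of $v$, and $w_a w_b$ is the edge joining them, which is a chord of $H$. I would state the "triangle on an induced cycle $\Rightarrow$ three consecutive vertices" claim explicitly (it is immediate: any two nonconsecutive vertices of an induced cycle are non-adjacent), and then the contradiction is immediate. No case analysis on $j$ is needed; only the indegree bound $d_D^-(v) \le 2$ and $k \ge 4$ are used.
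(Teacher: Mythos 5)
Your proof is correct and takes essentially the same route as the paper: the two endpoints of the cared edge are in-neighbors of $v$, so $v$ together with them forms a triangle in $P(D)$ all of whose vertices lie on $H$, which forces a chord of the hole since $H$ has length at least four. The paper says this in two lines; your detour through ``three consecutive vertices'' (and the relabeling $v=w_a$, which clashes with $w_a$ being an in-neighbor of $v$) is unnecessary but harmless.
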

\begin{proof}
Suppose, to the contrary, that there exists a vertex $v$ on $H$ which takes care of an edge $xy$ on $H$.
Then $\{x, y, v\}$ forms a triangle in $P(D)$, so $yv$ or $vx$ is a chord of $H$ in $P(D)$ and we reach a contradiction.
\end{proof}

Given a $(2,j)$ digraph $D$, suppose that $P(D)$ has a hole $H$ and $e_1, e_2, \ldots, e_t$ are the cared edges on $H$.
Let $w_1, w_2, \ldots, w_t$ be vertices taking care of $e_1, e_2, \ldots, e_t$, respectively.
Since the indegree of $w_i$ is at most two in $D$ for $i=1, \ldots, t$, $w_1, w_2, \ldots, w_t$ are distinct.
We let $W=\{w_1, w_2, \ldots, w_t\}$ and call $W$ a \emph{set extending} $H$ by extending the notion introduced in Lee~{\it et al.}~\cite{lee2017phylogeny}.
By Proposition~\ref{prop:not on H},
\begin{equation}\label{eqn:caringvertex}
W \subset V(D) \setminus V(H).
\end{equation}
Therefore we may obtain a cycle in $U(D)$ from $H$ by replacing each edge $e_i$ with a path of length two from one end of $e_i$ to the other end of $e_i$ with the interior vertex $w_i$.
We call such a cycle the \emph{cycle obtained from $H$ by $W$}.
Let $L$ be the subgraph of $U(D)$ induced by $V(H) \cup W$.
We call $L$ the \emph{subgraph of $U(D)$ obtained from $H$ by $W$}.
These notions extend the ones introduced in Lee~{\it et al.}~\cite{lee2017phylogeny}.

%Let $L$ be the subgraph of $U(D)$ induced by $V(H) \cup W$.
%We call $L$ the \emph{subgraph of $U(D)$ obtained from $H$ by $W$}.
%By definition, the cycle obtained from $H$ by $W$ is a hamiltonian cycle of $L$.

Lee~{\it et al.}~\cite{lee2017phylogeny} showed that, for a $(2,2)$ digraph $D$ such that the holes of $P(D)$ are mutually vertex-disjoint and no hole in $U(D)$ has length $4$ or $6$,  the number of holes in $U(D)$ is greater than or equal to the number of holes in $P(D)$.

\begin{Thm}[\cite{lee2017phylogeny}]~\label{Thm:13}
Let $H$ be a hole of the phylogeny graph $P(D)$ of a $(2,2)$ digraph $D$.
Then there is a hole $\phi(H)$ in the underlying graph $U(D)$ of $D$ such that
\begin{itemize}
\item
$\phi(H)$ equals $H$ if $H$ is a hole in $U(D)$;
\item
$\phi(H)$ is a hole in $U(D)$ only containing vertices in the subgraph obtained from $H$ by a set extending $H$ otherwise.
\end{itemize}
Moreover, if the holes of $P(D)$ are mutually vertex-disjoint and no hole in $U(D)$ has length $4$ or $6$,
then there exists an injective map from the set of holes in $P(D)$ to the set of holes in $U(D)$.
\end{Thm}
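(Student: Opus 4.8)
The plan is to treat the single-hole assertion and the injectivity assertion separately, after recording some facts about caring vertices that hold in any $(2,j)$ digraph. If a vertex $v$ takes care of a cared edge $xy$ of $P(D)$, then $v$ is a common out-neighbour of $x$ and $y$, so $d_D^-(v)\ge 2$, and since $D$ is a $(2,j)$ digraph this forces $d_D^-(v)=2$ and $N_D^-(v)=\{x,y\}$. Two consequences, for a set $W=\{w_1,\dots,w_t\}$ extending a hole $H$: first, $W$ is an independent set in $U(D)$, since an arc between $w_k$ and $w_l$ would put one of them in the in-neighbourhood of the other, a subset of $V(H)$, contradicting~\eqref{eqn:caringvertex}; second, with $L$ the subgraph of $U(D)$ obtained from $H$ by $W$ and $C$ the cycle obtained from $H$ by $W$, the only edges of $L$ joining two vertices of $V(H)$ are the non-cared edges of $H$ (any other edge of $U(D)$ between two vertices of $H$ would be a chord of the hole $H$ in $P(D)$), so every edge of $L$ that is not an edge of $C$ joins some $w_k$ to an out-neighbour of $w_k$ lying on $H$.

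For the first assertion: if $H$ is a hole of $U(D)$, set $\phi(H)=H$. Otherwise $H$ has a cared edge, so $C$ has length $|V(H)|+t\ge 5$ and lies in $L$, and it suffices to produce a hole of $L$, since (as $L$ is induced in $U(D)$) such a hole is a hole of $U(D)$ lying in the subgraph obtained from $H$ by $W$. I would prove $L$ is not chordal. Take a shortest cycle $Z$ of $L$ of length at least $4$; one exists, namely $C$. Any chord of $Z$ is an edge of $L$ and splits $Z$ into two cycles whose lengths sum to $|Z|+2$, so if $|Z|\ge 5$ one of them has length at least $4$, contradicting minimality; hence such a shortest $Z$ of length at least $5$ is chordless. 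The one remaining case is $|Z|=4$ carrying a chord; using that $W$ is independent, the chord must be an edge of $C$ joining two vertices of $H$, and running the possible arc orientations through the acyclicity of $D$ and the degree bounds either excludes the configuration or pinpoints a different chordless $4$-cycle, which is then taken as $\phi(H)$. (An alternative is to resolve the chords of $C$ one at a time: a chord $w_kh_i$ splits $C$ into two cycles, the longer of length at least $4$, which by the edge description again has all of its chords of the same special form, so one iterates down to a chordless cycle of length at least $4$; the subtlety is again that the process should not be forced down to a triangle.)

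For the injectivity: fix distinct vertex-disjoint holes $H_1\ne H_2$ of $P(D)$, with extending sets $W_1,W_2$, cycles $C_1,C_2$ obtained from them, and subgraphs $L_1,L_2$. First note $W_1\cap W_2=\emptyset$, since a common caring vertex would have its two-element in-neighbourhood lying in both $V(H_1)$ and $V(H_2)$. If $H_1,H_2$ are both holes of $U(D)$, then $\phi$ is the identity on them and there is nothing to do. If $H_1$ is a hole of $U(D)$ but $H_2$ is not, then $\phi(H_1)=H_1$; were it equal to $\phi(H_2)$, which only uses vertices of $L_2$, then by vertex-disjointness $V(H_1)\subseteq W_2$, impossible since $W_2$ is independent while $H_1$ has edges. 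So suppose neither $H_1$ nor $H_2$ is a hole of $U(D)$ and $\phi(H_1)=\phi(H_2)=:Z$. From $V(Z)\subseteq(V(H_1)\cup W_1)\cap(V(H_2)\cup W_2)$ together with the disjointness of $V(H_1),V(H_2)$ and of $W_1,W_2$ and with~\eqref{eqn:caringvertex}, the vertex set of the cycle $Z$ splits as $V(Z)=A\cup B$ with $A\subseteq V(H_1)\cap W_2$ and $B\subseteq V(H_2)\cap W_1$ disjoint. Since $A\subseteq W_2$ and $B\subseteq W_1$ are independent in $U(D)$, $Z$ is an even cycle alternating between $A$ and $B$; in particular $|Z|\ge 4$, hence $|Z|\ge 8$ by the hypothesis that $U(D)$ has no hole of length $4$ or $6$. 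Reading each edge of $Z$ once inside $L_1$ (there its $B$-endpoint is a caring vertex of $H_1$ and its $A$-endpoint lies on $H_1$) and once inside $L_2$, and matching the two descriptions against the edge classification above, one finds that each edge of $Z$ is either directed from its $A$-endpoint to its $B$-endpoint and is then an edge of $C_1$, or directed from its $B$-endpoint to its $A$-endpoint and is then an edge of $C_2$. A case analysis of this induced orientation of $Z$ — using that two adjacent vertices can be neither both local sources nor both local sinks, feeding the source/sink data back through the bound $2$ on in- and out-degrees, and using acyclicity — then either produces a directed cycle in $D$ or forces $|Z|\in\{4,6\}$, a contradiction in either case. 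Hence $\phi$ is injective.

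I expect the main obstacle, in both parts, to be the short-cycle bookkeeping: in the existence part, confirming that a shortest cycle of length at least $4$ in $L$ (or the cycle surviving the resolution of chords) is genuinely chordless rather than a triangulated $4$-cycle; and in the injectivity part, pushing the alternating cycle $Z$ through the $(2,2)$ degree constraints and acyclicity, which is precisely where the hypothesis forbidding holes of length $4$ or $6$ in $U(D)$ enters. Both reduce to a finite but fiddly case analysis on arc orientations around short cycles of $L$, and that is where essentially all of the effort lies.
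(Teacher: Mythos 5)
Your reduction of the existence half to ``the subgraph $L$ obtained from $H$ by $W$ contains a hole'' is fine (the preliminary facts about caring vertices, the independence of $W$, and the classification of the edges of $L$ are all correct), but the argument does not close. Picking a shortest cycle $Z$ of length at least four in $L$ only yields a hole when $|Z|\ge 5$; in the residual case $|Z|=4$ with a chord you assert (i) that the chord must be a non-cared edge of $H$ and (ii) that an orientation case analysis either kills the configuration or exhibits a chordless $4$-cycle. Claim (i) is unjustified and in fact can fail: the chord may be an edge from a caring vertex $w_k$ to an out-neighbour of $w_k$ on $H$ (a chord of $C$), e.g.\ $Z=w_kbcd$ with $b,c,d$ consecutive on $H$, $bc,cd$ non-cared, and $w_k$ adjacent to all three; since the in-neighbours of $w_k$ are exactly the two ends of $e_k$ and $w_k$ may have two out-neighbours, the $(2,2)$ degree bounds do not exclude this. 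Claim (ii) is precisely the content to be proved, and it cannot be settled locally: the hole promised by the theorem may sit elsewhere in $L$, so analysing the chorded $4$-cycle alone cannot suffice. The missing idea is the anchoring device the paper uses for its strengthening: fix an acyclic labeling $f$, take the vertex $w$ of least $f$-value in $V(H)\cup W$, show $w\in W$ and that no chord of $C$ is incident to $w$ (a chord would give $w$ a third in-neighbour), and then apply Lemma~\ref{lem:chord} to the induced path $u_1wu_l$ to produce a hole inside $V(C)$ --- see Lemma~\ref{lem:least f-value in a hole}.

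The injectivity half has the same shape of gap. Your reduction is correct and pleasant up to the point where a common image $Z$ must alternate between $A\subseteq V(H_1)\cap W_2$ and $B\subseteq V(H_2)\cap W_1$, each edge of $Z$ being an edge of $C_1$ (oriented $A\to B$) or of $C_2$ (oriented $B\to A$), with $|Z|\ge 8$. But the announced finish --- local sources/sinks plus the degree bounds plus acyclicity force a directed cycle or $|Z|\in\{4,6\}$ --- is only asserted, and the local data do not force it: orienting every edge of $Z$ from $A$ to $B$ (so that $Z=C_1$, every edge of $H_1$ is cared, each $a_i$ has out-neighbours $b_{i-1},b_i$ and its two in-neighbours on $H_2$) respects every indegree/outdegree bound and acyclicity, with $|Z|=2m\ge 8$ permitted by the hypothesis; excluding it requires a genuinely global argument tying $Z$ to both holes, which you have not supplied. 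Note also that the paper does not reprove this theorem (it is quoted from Lee et al.); its own strengthening gets injectivity by a much shorter route: $\phi(H_i)$ is constructed so as to contain the minimum-label caring vertex $w_i\in W_i$, whence $\phi(H_1)=\phi(H_2)$ forces $w_1=w_2$, a vertex which would then need at least three in-neighbours once the holes are edge-disjoint --- no hypothesis on hole lengths in $U(D)$ is needed. As it stands, both halves of your proposal defer their essential step to an unexecuted (and, in part, incorrectly framed) case analysis, so the proof is incomplete.
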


We shall devote the first part of this section to extending the above theorem given in \cite{lee2017phylogeny}.
To do so, we need the following lemmas.

\begin{Lem}[\cite{choi2018new}]~\label{lem:chord}
Given a graph $G$ and a cycle $C$ of $G$ with length at least four,
suppose that a section $Q$ of $C$ forms an induced path of $G$ and contains a path $P$  with length at least two none of whose internal vertices is incident to a chord of $C$ in $G$.
Then $P$ can be extended to a hole $H$ in $G$ so that $V(P) \subsetneq V(H) \subset V(C)$ and $H$ contains a vertex on $C$ not on $Q$.
\end{Lem}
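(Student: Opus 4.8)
The plan is to close the path $P$ into a hole by connecting its two ends through a shortcut along the complementary arc of $C$. First I would fix notation: write $C=v_0v_1\cdots v_{n-1}v_0$ with $n\ge 4$, and choose the cyclic labelling so that $Q=v_0v_1\cdots v_m$ and $P=v_pv_{p+1}\cdots v_q$ with $0\le p<q\le m$ and $q-p\ge 2$; since a section of $C$ is a proper subpath, $V(Q)\subsetneq V(C)$. Two observations drive the argument: because $v_p$ and $v_q$ are non-consecutive on the induced path $Q$, we have $v_pv_q\notin E(G)$; and because each internal vertex $v_i$ of $P$ (that is, $p<i<q$) is incident to no chord of $C$, inside $G[V(C)]$ the vertex $v_i$ is adjacent only to $v_{i-1}$ and $v_{i+1}$, both of which lie on $P$.

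Next I would build the shortcut. Let $G'=G\bigl[V(C)\setminus\{v_{p+1},\dots,v_{q-1}\}\bigr]$. The arc $v_qv_{q+1}\cdots v_{n-1}v_0\cdots v_p$ of $C$ is a $v_p$--$v_q$ path in $G'$, so a shortest $v_p$--$v_q$ path $P'$ in $G'$ exists; as shortest paths are induced and $G'$ is an induced subgraph of $G$, $P'$ is an induced path of $G$, and since $v_pv_q\notin E(G)$ it has length at least two. Because the interior of $P$ was deleted in forming $G'$, we get $V(P)\cap V(P')=\{v_p,v_q\}$, so $H:=P\cup P'$ is a cycle of length $(q-p)+\mathrm{len}(P')\ge 4$.

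I would then check that $H$ is chordless. Let $a,b$ be non-consecutive vertices of $H$. If $a,b\in V(P)$, they are non-adjacent because $P$ is an induced path (a subpath of $Q$) and $v_pv_q\notin E(G)$. If $a,b\in V(P')$, they are non-adjacent because $P'$ is induced and $v_pv_q\notin E(G)$. If $a$ is interior to $P$ and $b$ interior to $P'$, then the only neighbours of $a$ in $V(C)$ lie on $P$, hence differ from $b$, so $a$ and $b$ are non-adjacent. Thus $H$ is a hole, and it satisfies $V(P)\subsetneq V(H)$ (an interior vertex of $P'$ lies in $V(H)\setminus V(P)$) and $V(H)\subseteq V(C)$.

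Finally, to see that $H$ meets $C$ outside $Q$, suppose $V(H)\subseteq V(Q)$; then $V(P')\subseteq V(Q)\setminus\{v_{p+1},\dots,v_{q-1}\}=\{v_0,\dots,v_p\}\cup\{v_q,\dots,v_m\}$. Since $Q$ is induced and $v_p,v_q$ are non-consecutive on it, the subgraph of $G$ induced on this set is the disjoint union of the paths $v_0\cdots v_p$ and $v_q\cdots v_m$, which contains no $v_p$--$v_q$ path, contradicting the existence of $P'$. Hence $H$ has a vertex of $C$ not on $Q$, completing the proof. I expect the main obstacle to be the chordlessness check for $H$: one must see precisely that the no-chord hypothesis on the interior vertices of $P$ is exactly what kills every potential edge between $P$ and $P'$, and that the hypothesis that $Q$ is induced gets used twice -- once to discard the chord $v_pv_q$ and once to force $P'$ to leave $Q$.
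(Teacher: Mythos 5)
Your proof is correct. There is nothing in the paper to compare it against: the lemma is quoted from \cite{choi2018new} and the paper gives no proof of it. Your shortcut construction is the natural argument and is carried out soundly: taking $P'$ to be a shortest $v_p$--$v_q$ path in $G\bigl[V(C)\setminus\{v_{p+1},\dots,v_{q-1}\}\bigr]$ (which exists via the complementary arc of $C$), the three ingredients---inducedness of $P'$, inducedness of $Q$ (which rules out the edge $v_pv_q$ and, at the end, forces $P'$ to leave $Q$), and the no-chord hypothesis on the interior of $P$ (which rules out edges between the interiors of $P$ and $P'$)---do cover every potential chord of $H=P\cup P'$, and the length and containment claims follow as you state. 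One cosmetic remark: your opening claim that a section of $C$ is a \emph{proper} subpath, hence $V(Q)\subsetneq V(C)$, is never used in the argument; in any case that containment already follows from $Q$ being induced, since a section through all of $V(C)$ would have adjacent endpoints.
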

%
%\noindent
%The following lemma tells us that a hole contained in the phylogeny graph of an acyclic digraph contains a cared edge.
%
%\begin{Lem}\label{lem:not a hole}
%Let $H$ be a hole of the phylogeny graph $P(D)$ of an acyclic digraph $D$.
%Then $H$ is not a hole in $U(D)$.
%\end{Lem}
%\begin{proof}
%Suppose that $H$ is a hole in $U(D)$.
%Since $D$ is acyclic, the subdigraph $D_H$ of $D$ induced by $V(H)$ is also acyclic.
%Thus there is a vertex $u \in V(H)$ such that $d_{D_H}^-(u)=2$.
%Then the two in-neighbors of $u$ in $D_H$ are joined by an edge $e$ in $P(D)$.
%However $e$ is a chord of $H$ in $P(D)$, which contradicts the hypothesis that $H$ is a hole in $P(D)$.
%\end{proof}

\begin{Lem}\label{lem:least f-value in a hole}
Let $D$ be a $(2, j)$ digraph and $f$ be an acyclic labeling of $D$ for a positive integer $j$.
In addition, let $H$ be a hole of $P(D)$, $W$ be a set extending $H$, and $w$ be a vertex  with the least $f$-value in $V(H) \cup W$. Then $w \in W$.
Moreover, there is a hole $\phi(H)$ in $U(D)$ such that $w \in V(\phi(H))$ and $V(\phi(H)) \subset V(H) \cup W$.
\end{Lem}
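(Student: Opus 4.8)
The plan is to prove the two assertions in turn, exploiting throughout the minimality of $f(w)$ and the indegree bound $d_D^-(\cdot)\le 2$ available in a $(2,j)$ digraph. For the first assertion, that $w\in W$, I would argue by contradiction. Suppose $w\in V(H)$, and let $x,y$ be the two neighbours of $w$ on $H$. If the edge $wx$ of $H$ is a cared edge, then the caring vertex $w_i\in W$ assigned to it is a common out-neighbour of $w$ and $x$, so $(w,w_i)\in A(D)$, giving $f(w)>f(w_i)$ with $w_i\in V(H)\cup W$, which contradicts the choice of $w$. Hence $wx\in E(U(D))$, and since $f(w)$ is least in $V(H)\cup W$ and $x\in V(H)$ the arc between them must be $(x,w)$; likewise $(y,w)\in A(D)$. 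Then $w$ is a common out-neighbour of $x$ and $y$, so $xy\in E(C(D))\subseteq E(P(D))$, and since $x,y$ are the two neighbours of $w$ on the hole $H$ (which has length at least four), $xy$ is a chord of $H$ in $P(D)$, contradicting that $H$ is induced in $P(D)$. Therefore $w\in W$; in particular $W\neq\emptyset$, so the cycle $C$ obtained from $H$ by $W$ has length $|V(H)|+|W|\ge 5$.

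For the second assertion, write $w=w_k$ and let $e_k=x_ky_k$ be the cared edge of $H$ taken care of by $w$, so that $w$ has exactly the neighbours $x_k,y_k$ on $C$. The crucial observation is that $w$ is incident to no chord of $C$ in $U(D)$: a chord $wz$ with $z\in V(C)\setminus\{x_k,y_k\}$ would give an arc between $w$ and $z$ in $D$, necessarily $(z,w)$ by minimality of $f(w)$ on $V(C)=V(H)\cup W$; but $x_k,y_k\in N_D^-(w)$ (as $w$ is a common out-neighbour of $x_k$ and $y_k$) and $d_D^-(w)\le 2$, forcing $z\in\{x_k,y_k\}$, a contradiction. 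If $C$ has no chord in $U(D)$, then $C$ is a hole of $U(D)$ with $V(C)=V(H)\cup W$ containing $w$, and $\phi(H):=C$ works. Otherwise, the length-$2$ path $P_0=x_kwy_k$ is an induced path of $U(D)$ because the cared edge $x_ky_k$ is not an edge of $U(D)$; I would enlarge $P_0$ to a section $Q$ of $C$ maximal subject to being an induced path of $U(D)$, which is a proper section of $C$ since the two ends of a full traversal of $C$ are joined by an edge of $C$. Then $Q$ is an induced path containing the subpath $P_0$ of length at least two whose only internal vertex $w$ is incident to no chord of $C$, so Lemma~\ref{lem:chord} applied to $C$, $Q$, $P_0$ yields a hole $\phi(H)$ of $U(D)$ with $w\in V(P_0)\subsetneq V(\phi(H))\subseteq V(C)=V(H)\cup W$, as desired.

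The step I expect to be the main obstacle is fitting the second assertion into the hypotheses of Lemma~\ref{lem:chord}, i.e.\ producing an induced-path section of $C$ through $w$ whose interior avoids chords of $C$. The two facts that make this work are that a cared edge is, by definition, not an edge of $U(D)$ (so $P_0$ really is an induced path), and that the indegree bound of a $(2,j)$ digraph together with $w$ having the smallest $f$-value forces $N_D^-(w)=\{x_k,y_k\}$ and hence rules out every chord at $w$. The remaining moves — extending $P_0$ to a maximal chord-free section and quoting Lemma~\ref{lem:chord} — are routine.
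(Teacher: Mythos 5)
Your proof is correct and follows essentially the same route as the paper: minimality of $f(w)$ forces arcs into $w$ that either create a chord/triangle of $H$ or contradict minimality (giving $w\in W$), and then the indegree bound rules out chords of $C$ at $w$ so that Lemma~\ref{lem:chord} applies to the induced path $x_kwy_k$. The only difference is cosmetic: the paper applies Lemma~\ref{lem:chord} directly with $Q=P=x_kwy_k$, so your chordless-$C$ case split and the maximal extension of $Q$ are unnecessary (though harmless).
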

\begin{proof}
Let $H = u_1u_2 \cdots u_lu_1$ for an integer $l \ge 4$.
% and $W =\{w_1, \ldots, l\}$ ($k \ge l \ge 1$).
To reach a contradiction, we suppose that $w  \in V(H)$.
Without loss of generality, we may assume that $w=u_1$.
Suppose that $u_1u_2$ and $u_1u_l$ are edges of $U(D)$.
Then, since $u_1$ has the least $f$-value in $V(H)$, $(u_2, u_1) \in A(D)$ and $(u_l, u_1) \in A(D)$ and so $\{u_1, u_2, u_l\}$ forms a triangle in $P(D)$, which is a contradiction to the supposition that $H$ is a hole in $P(D)$.
Therefore $u_1u_2$ or $u_1u_l$ is a cared edge in $P(D)$.
Without loss of generality, we may assume $u_1u_2$ is a cared edge in $P(D)$.
Then $u_1$ and $u_2$ have a common out-neighbor, say $v$, in $W$, which implies that $f(v) < f(u_1)$. Thus we have reached a contradiction and so $w \in W$.

Now we  show that the ``moreover" part of the lemma statement is true.
Let $C$ be  the cycle in $U(D)$ obtained from $H$ by $W$.
Without loss of generality, we may assume that $u_1u_l$ is taken care of by $w$.
Then $(u_1, w) \in A(D)$ and $(u_l, w) \in A(D)$.

Suppose, to the contrary, that $C$ has a chord which is incident to $w$ in $U(D)$.
Let $xw$ be a chord of $C$ in $U(D)$.
Then $x \notin \{u_1, u_l\}$.
Moreover, since $w$ has the least $f$-value in $V(H) \cup W$, $(w, x) \notin A(D)$.
Then $u_1$, $u_l$, and $x$ are in-neighbors of $w$ in $D$, which contradicts the hypothesis that $D$ is a $(2, j)$ digraph.
Hence there is no chord of $C$ which is incident to $w$ in $U(D)$.
Since $u_1u_l$ is a cared edge in $P(D)$, $u_1wu_l$ is an induced path in $U(D)$.
By applying Lemma~\ref{lem:chord} for $P=Q=u_1wu_l$, we may conclude that ``moreover" part of the lemma statement is true.
\end{proof}

Now we are ready to extend Theorem~\ref{Thm:13} to not only make it valid for $(2,j)$ digraphs but also strengthen it.

\begin{Thm} \label{thm:new, hole of the phylogeny graph of (2,j) digraph}
For a positive integer $j$, let $H$ be a hole of the phylogeny graph $P(D)$ of a $(2,j)$ digraph $D$.
Then there is a hole in $U(D)$ which only contains vertices in the subgraph of $U(D)$ obtained from $H$ by a set extending $H$.
Moreover, if $P(D)$ has a hole and the holes of $P(D)$ are mutually edge-disjoint, then there exists an injective map from the set of holes in $P(D)$ to the set of holes in $U(D)$.
\end{Thm}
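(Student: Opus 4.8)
\emph{Proof plan.} The plan is to read the first assertion off Lemma~\ref{lem:least f-value in a hole} directly, and to build the injective map from the very hole $\phi(H)$ produced in that lemma, using one fixed acyclic labeling of $D$ to break ties between distinct holes of $P(D)$.

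First I would dispose of the existence claim. Fix an acyclic labeling $f$ of $D$ and a set $W$ extending $H$. If $H$ has no cared edge, then $W=\emptyset$ and every edge of $H$ lies in $U(D)$; since $U(D)$ is a spanning subgraph of $P(D)$ and $H$ is an induced cycle of $P(D)$, the cycle $H$ is itself a hole of $U(D)$, and it lies inside the subgraph of $U(D)$ induced by $V(H)$, which is the subgraph of $U(D)$ obtained from $H$ by $W$. If $H$ has a cared edge, then $W\neq\emptyset$ and Lemma~\ref{lem:least f-value in a hole} hands us a hole $\phi(H)$ of $U(D)$ with $V(\phi(H))\subset V(H)\cup W$; since $V(H)\cup W$ is exactly the vertex set of the subgraph of $U(D)$ obtained from $H$ by $W$, this settles the first part.

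For the injective map I would fix once and for all an acyclic labeling $f$ of $D$, and for each hole $H$ of $P(D)$ fix the following data: if $H$ has no cared edge, put $\phi(H):=H$; otherwise fix a set $W_H$ extending $H$, let $w_H$ be the vertex of least $f$-value in $V(H)\cup W_H$ (so $w_H\in W_H$ by Lemma~\ref{lem:least f-value in a hole}), let $a_Hb_H$ be the cared edge of $H$ that $w_H$ takes care of --- so $\{a_H,b_H\}=N_D^-(w_H)$, using that $D$ is a $(2,j)$ digraph --- and let $\phi(H)$ be the hole of $U(D)$ produced in the proof of Lemma~\ref{lem:least f-value in a hole} by applying Lemma~\ref{lem:chord} to $P=Q=a_Hw_Hb_H$; then $\phi(H)$ contains $a_Hw_Hb_H$ as a subpath, so $a_H$ and $b_H$ are the two neighbours of $w_H$ on $\phi(H)$, and $V(\phi(H))\subset V(H)\cup W_H$. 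Now I would prove $\phi$ injective. Suppose $\phi(H_1)=\phi(H_2)=:H^*$ for distinct holes $H_1,H_2$ of $P(D)$. If neither has a cared edge, then $H_1=\phi(H_1)=\phi(H_2)=H_2$, impossible. If exactly one, say $H_1$, has no cared edge, then $H^*=H_1$ carries $w_{H_2}$ with its two $H_1$-neighbours equal to $a_{H_2},b_{H_2}$; since $|V(H_1)|\ge 4$, the vertices $a_{H_2}$ and $b_{H_2}$ are nonadjacent on the cycle $H_1$, hence nonadjacent in $P(D)$ because $H_1$ is induced, contradicting $a_{H_2}b_{H_2}\in E(H_2)\subseteq E(P(D))$. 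Finally, if both $H_1,H_2$ have cared edges, then $w_{H_1}=w_{H_2}$ would force $\{a_{H_1},b_{H_1}\}=N_D^-(w_{H_1})=\{a_{H_2},b_{H_2}\}$, so $H_1$ and $H_2$ would share the edge $a_{H_1}b_{H_1}$, contradicting mutual edge-disjointness; hence $w_{H_1}\neq w_{H_2}$. On the other hand $w_{H_1}\in V(\phi(H_1))=V(H^*)=V(\phi(H_2))\subset V(H_2)\cup W_{H_2}$, so $f(w_{H_2})\le f(w_{H_1})$ by minimality of $f(w_{H_2})$ on $V(H_2)\cup W_{H_2}$; symmetrically $f(w_{H_1})\le f(w_{H_2})$; hence $w_{H_1}=w_{H_2}$, a contradiction. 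Thus $\phi$ is injective, and since $P(D)$ has a hole its domain is nonempty.

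The step I expect to be the main obstacle --- really the only delicate point --- is justifying that $\phi(H)$ may be chosen to pass through $w_H$ with its two neighbours on $\phi(H)$ equal to the endpoints of the cared edge $w_H$ takes care of. This is what links the hole $\phi(H)\subseteq U(D)$ to a concrete edge of $H$, and it powers both the ``mixed'' case and the ``$w_{H_1}=w_{H_2}$'' case above; it is visible in the \emph{proof} of Lemma~\ref{lem:least f-value in a hole} (which applies Lemma~\ref{lem:chord} with $P=a_Hw_Hb_H$, and a hole extending $P$ contains $P$) rather than in its statement, so I would either cite that proof or, more cleanly, strengthen Lemma~\ref{lem:least f-value in a hole} to record it. It is worth noting where the hypotheses are used: mutual edge-disjointness enters only to exclude $w_{H_1}=w_{H_2}$, and the $(2,j)$ assumption only through the fact that a caring vertex has exactly two in-neighbours, namely the endpoints of the edge it cares for.
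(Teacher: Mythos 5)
Your proof is correct and follows essentially the same route as the paper: both take $w_H$ to be the vertex of least $f$-value in $V(H)\cup W_H$, get $\phi(H)$ from Lemma~\ref{lem:least f-value in a hole}, and prove injectivity by playing the minimality of $f(w_H)$ on a common image hole against the fact that a shared caring vertex would have the endpoints of cared edges of two edge-disjoint holes among its at most two in-neighbors. Your extra cases for holes with no cared edge (and the reliance on the subpath $a_Hw_Hb_H$ there) are harmless but vacuous, since the argument in Lemma~\ref{lem:least f-value in a hole} already forces every hole of $P(D)$ to carry a cared edge, which is why the paper dispenses with them.
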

\begin{proof}
The first part of this theorem is immediately true by Lemma~\ref{lem:least f-value in a hole}.

To show the second part of the theorem statement, we assume that $P(D)$ has a hole and the holes in $P(D)$ are mutually edge-disjoint.
Let $f$ be an acyclic labeling of $D$, $\{H_1, \ldots, H_l\}$ be the set of holes in $P(D)$, and $W_i$ be a set extending $H_i$ for each $i=1, \ldots, l$. Let $w_i$ be the vertex with the least $f$-value in $V(H_i) \cup W_i$ for each $i=1, \ldots, l$. Then, by Lemma~\ref{lem:least f-value in a hole}, $w_i \in W_i$ and there exists a hole $\phi(H_i)$ such that $w_i \in V(\phi(H_i))$ and $V(\phi(H_i)) \subset V(H_i) \cup W_i$
for each $i=1, \ldots, l$.
At this point, we may regard $\phi$ as a map from the set of the holes in $P(D)$ to the set of holes in $U(D)$.

In the following, we show that $\phi$ is injective.
Suppose, to the contrary, that $\phi(H_j)=\phi(H_k)$ for some $j$ and $k$ satisfying $1 \le j < k \le l$.
Since $w_i$ is the vertex with the least $f$-value in $V(H_i) \cup W_i$ and $V(\phi(H_i) )\subset V(H_i) \cup W_i$, $w_i$ has the least $f$-value in $V(\phi(H_i))$ for each $i\in \{j,k\}$.
Then, since $\phi(H_j)=\phi(H_k)$, $w_j =w_k$ and so $w_j \in W_j \cap W_k$.
Thus $w_j$ has two in-neighbors on $H_j$ and two in-neighbors on $H_k$ in $D$. Then, by the hypothesis that $H_j$ and $H_k$ are edge-disjoint, $w_j$ has at least three distinct in-neighbors in $D$, which violates the indegree restriction on $D$.
Hence $\phi(H_j) \neq \phi(H_k)$ for any $j$ and $k$ satisfying $1 \le j < k \le l$ and we have shown that $\phi$ is injective.
\end{proof}

The underlying graph of an $(i,j)$ digraph $D$ being chordal does not guarantee that the phylogeny graph of $D$ is chordal. For example, the underlying graph of the $(3, 2)$ digraph given in Figure~\ref{fig:counterexample} is chordal whereas its phylogeny graph has a hole $v_2v_3v_5v_6v_2$. However, if $i \le 2$ or $j=1$, then it does guarantee by the above theorem together with Theorems~\ref{thm:(1,j) realizable} and~\ref{thm:(i,1) realizable}. As a matter of fact, we have shown the following theorem.
\begin{Thm}\label{thm:(2,j) chordal}
Let $\DDD^*_{i,j}$ be the set of $(i, j)$ digraphs whose underlying graphs are chordal for positive integers $i$ and $j$.
Then the phylogeny graph of $D$ is chordal for any $D \in \DDD^*_{i,j}$ if and only if $i \le 2$ or $j=1$.
\end{Thm}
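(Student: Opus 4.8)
The plan is to obtain this theorem as a synthesis of the results already established, treating the two implications separately and, for the forward implication, passing to the contrapositive.

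For the direction ``$i \le 2$ or $j = 1$ forces $P(D)$ to be chordal for every $D \in \DDD^*_{i,j}$'', I would first invoke ($\star$) to reduce to weakly connected $D$ and then split into three (overlapping) cases according to which of $i = 1$, $i = 2$, $j = 1$ holds. If $j = 1$, then by Lemma~\ref{lem:diamond-free chordal} the graph $P(D)$ is already diamond-free and chordal, with no appeal to the chordality of $U(D)$ needed at all. If $i = 1$, then by Theorem~\ref{thm:(1,j) realizable} the graph $P(D)$ is a forest, hence chordal, again with no hypothesis on $U(D)$. If $i = 2$, then the chordality of $U(D)$ finally enters: assuming $P(D)$ had a hole $H$, the first part of Theorem~\ref{thm:new, hole of the phylogeny graph of (2,j) digraph} would produce a hole of $U(D)$ lying inside the subgraph of $U(D)$ obtained from $H$ by a set extending $H$, contradicting $D \in \DDD^*_{2,j}$; hence $P(D)$ has no hole.

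For the converse I would argue the contrapositive: suppose $i \ge 3$ and $j \ge 2$. The $(3,2)$ digraph $D$ depicted in Figure~\ref{fig:counterexample} has a chordal underlying graph, yet $P(D)$ contains the hole $v_2v_3v_5v_6v_2$, so $P(D)$ is not chordal. Since $3 \le i$ and $2 \le j$, this same $D$ is also an $(i,j)$ digraph, so $D \in \DDD^*_{i,j}$, witnessing that not every member of $\DDD^*_{i,j}$ has a chordal phylogeny graph; one could manufacture a genuinely new example for each pair $(i,j)$ by padding with extra vertices, but it is not needed.

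The substantive content of the argument is entirely concentrated in the earlier results — in particular in Theorem~\ref{thm:new, hole of the phylogeny graph of (2,j) digraph} and Lemma~\ref{lem:diamond-free chordal} — so I do not expect a genuine obstacle in the proof itself. The only points requiring care are bookkeeping ones: being explicit that the $i = 1$ and $j = 1$ cases need no assumption on $U(D)$ whereas the $i = 2$ case does, and verifying that the single small example of Figure~\ref{fig:counterexample} serves as a witness for the whole family $\{(i,j) : i \ge 3,\ j \ge 2\}$.
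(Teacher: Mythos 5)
Your proposal is correct and follows essentially the same route as the paper, which obtains the ``if'' direction from Theorem~\ref{thm:new, hole of the phylogeny graph of (2,j) digraph} (for $i\le 2$) together with Theorems~\ref{thm:(1,j) realizable} and~\ref{thm:(i,1) realizable} (equivalently, Lemma~\ref{lem:diamond-free chordal}) and the ``only if'' direction from the $(3,2)$ digraph of Figure~\ref{fig:counterexample}, whose role as a witness for every pair with $i\ge 3$, $j\ge 2$ you make explicit. No gaps.
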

By Theorem~\ref{thm:(2,j) chordal}, the phylogeny graph of a $(2,j)$ digraph $D$ is chordal if the underlying graph of $D$ is chordal for any positive integer $j$. By the way, if $j=2$, then the underlying graph being chordal guarantees not only $P(D)$ being chordal but also $P(D)$ being planar, which will be to be shown later in this section. By the way, Lee~{\it et al.}~\cite{lee2017phylogeny} showed that a $(2, 2)$ phylogeny graph is $K_5$-free.
\begin{Thm}[\cite{lee2017phylogeny}]~\label{thm:K5-free}
For any $(2, 2)$ digraph $D$, the phylogeny graph of $D$ is $K_5$-free.
\end{Thm}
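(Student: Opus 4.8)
The plan is to suppose, for a contradiction, that $P(D)$ contains a $K_5$ on a vertex set $S$, fix an acyclic labeling $f$ of $D$, and squeeze out a contradiction from the degree bounds $d_D^-\le 2$ and $d_D^+\le 2$. The first, elementary observation is a \emph{caring bound}: a caring vertex of a cared edge is an out-neighbor of both of its endpoints, distinct cared edges have distinct caring vertices (a vertex of indegree at most two cares for at most one edge), and each vertex has at most two out-neighbors; hence each vertex of $S$ is incident with at most two cared edges of the $K_5$. Dually, if a vertex of $S$ is a common out-neighbor of two vertices of $S$, then those two vertices are its only in-neighbors.

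Next I would pin down the two $f$-extreme vertices of $S$. Let $u$ be the one with the least $f$-value. Each of the four $K_5$-edges at $u$ is either an underlying edge of $D$ -- necessarily an arc into $u$ -- or a cared edge; by the caring bound and $d_D^-(u)\le 2$ these four edges split as exactly two arcs into $u$ and exactly two cared edges. Write $N_D^-(u)=\{a,b\}$ and $\{c,d\}=S\setminus\{u,a,b\}$; then $uc$ and $ud$ are the cared edges, and if $x,y$ are caring vertices of $uc$ and $ud$ respectively, then $x\neq y$, $N_D^-(x)=\{u,c\}$, $N_D^-(y)=\{u,d\}$, and $N_D^+(u)=\{x,y\}$. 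A quick check shows $x,y\notin S$: they cannot be $a$ or $b$ (that would give a $2$-cycle through $u$), cannot be $c$ or $d$ (then $uc$ or $ud$ would be an underlying edge, not a cared edge), and cannot be $u$. Symmetrically, if $m$ is the vertex of $S$ with the greatest $f$-value, the four $K_5$-edges at $m$ split as exactly two arcs out of $m$ and exactly two cared edges, and since the two out-arc targets and the two caring vertices all lie in $N_D^+(m)$, a set of size at most two, we get $N_D^+(m)\subseteq S$, $|N_D^+(m)|=2$, and the two cared edges at $m$ are cared by the two members of $N_D^+(m)$.

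Finally I would play these two pictures off each other. As $u$ is $f$-least and $m$ is $f$-greatest, $m\in\{a,b,c,d\}$. If $m=c$, then $cu$ is one of the cared edges at $m$, so its caring vertex $x$ lies in $N_D^+(m)\subseteq S$, contradicting $x\notin S$; likewise $m\neq d$. Hence $m\in\{a,b\}$, and by the symmetry between $a$ and $b$ we may take $m=a$. From the arc $a\to u$ we get $u\in N_D^+(a)$, so $u$ is one of the two caring vertices at $a$; as $N_D^-(u)=\{a,b\}$, this forces $ab$ to be cared (by $u$). Consequently one of $ac,ad$ is the second arc out of $a$ and the other is cared by the second member of $N_D^+(a)$, which forces $N_D^-(c)=\{a,d\}$ (up to swapping $c$ and $d$), so in particular $d\to c$. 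Then, pinning down the out-neighborhoods of $b$ and $d$ from the arcs and cared edges already found, one checks that the remaining $K_5$-edges among $b,c,d$ cannot be cared and that their forced orientations produce a directed triangle on $\{b,c,d\}$, contradicting acyclicity.

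I expect the main obstacle to be the disciplined bookkeeping of the last paragraph: keeping straight which $K_5$-edges are arcs and which are cared, and the orientation of each arc, so that no branch is overlooked. The conceptual heart of the argument, by contrast, is the pair of observations that the caring vertices at the $f$-least vertex $u$ must lie outside $S$ while those at the $f$-greatest vertex $m$ must lie inside $S$; this is exactly what forces $m\notin\{c,d\}$ and sets the contradiction machinery in motion.
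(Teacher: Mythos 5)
Your proposal is correct, but note that this paper does not actually prove Theorem~\ref{thm:K5-free}: it is imported verbatim from Lee~\emph{et al.}~\cite{lee2017phylogeny}, and the closest the present paper comes on its own is Lemma~\ref{lem:degenerate}, whose $4$-degeneracy bound only gives $\omega(P(D))\le 5$ for a $(2,2)$ digraph. So there is no in-paper argument to compare against line by line; what you supply is exactly the sharpening the paper outsources. Your argument is in the spirit of the paper's other proofs (extremal vertices of an acyclic labeling, as in Lemmas~\ref{lem:acyclic labeling} and~\ref{lem:least f-value in a hole}), and I checked that it closes: the two structural facts are sound (at the $f$-least vertex $u$ of the $K_5$ the four incident edges split into two in-arcs from $a,b$ and two cared edges $uc,ud$ whose caring vertices $x,y$ lie outside $S$ with $N_D^-(x)=\{u,c\}$, $N_D^-(y)=\{u,d\}$, $N_D^+(u)=\{x,y\}$; at the $f$-greatest vertex $m$ the caring vertices lie inside $S$), which rules out $m\in\{c,d\}$ and, for $m=a$, forces $ab$ to be cared by $u$ and, up to swapping $c$ and $d$, $N_D^+(a)=\{u,c\}$ and $N_D^-(c)=\{a,d\}$. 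The compressed endgame does go through: $N_D^+(d)=\{y,c\}$ is full, so $bd$ cannot be cared (its caring vertex would have to be $y$ or $c$, whose in-neighborhoods $\{u,d\}$ and $\{a,d\}$ are full) and cannot be the arc $(d,b)$, forcing $(b,d)\in A(D)$; then $N_D^+(b)=\{u,d\}$ is full, so $bc$ cannot be cared ($(c,u)$ is blocked by $N_D^-(u)=\{a,b\}$ and $(c,d)$ would form a $2$-cycle with $(d,c)$) and cannot be the arc $(b,c)$ since $N_D^-(c)$ is full, forcing $(c,b)\in A(D)$; together with $(d,c)$ this yields the directed triangle $b\to d\to c\to b$, contradicting acyclicity. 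If you write this up, I would only ask you to make that last case analysis explicit rather than ``one checks,'' since it is where all the degree bookkeeping is actually consumed.
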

We shall extend this theorem in two aspects. On one hand, we find a sharp upper bound for the clique number of $(2, j)$ phylogeny graph for any positive integer $j$. On the other hand, we show that the phylogeny graph $P(D)$ of a $(2,2)$ digraph $D$ is planar if the underlying graph of $D$ is chordal by showing that $P(D)$ is $K_5$-minor-free and $K_{3,3}$-minor-free.

For a positive integer $k$, a graph $G$ is \emph{$k$-degenerate} if any subgraph of $G$ contains a vertex having at most $k$ neighbors in it.

\begin{Lem}\label{lem:degenerate}
For a positive integer $j$, every $(2, j)$ phylogeny graph is $(j+2)$-degenerate.
\end{Lem}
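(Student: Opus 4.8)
The plan is to combine the acyclic labeling with the standard fact that degeneracy can be certified locally: a graph is $k$-degenerate if and only if every nonempty induced subgraph has a vertex of degree at most $k$. So I would fix a $(2,j)$ digraph $D$ with $P(D)$ isomorphic to the given graph, fix an acyclic labeling $f$ of $D$, and then prove the stronger claim that for \emph{every} nonempty $S \subseteq V(D)$ the vertex $v \in S$ with the least $f$-value has at most $j+2$ neighbours in $P(D)[S]$. Since every induced subgraph of $P(D)$ is exactly $P(D)[S]$ for some $S$, this is precisely $(j+2)$-degeneracy.

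To count the neighbours of $v$ in $P(D)[S]$, I would take any $x \in S \setminus \{v\}$ adjacent to $v$ in $P(D)$ and split into the two ways an edge of $P(D)$ can arise. If $xv \in E(U(D))$, then since arcs of $D$ strictly decrease $f$ and $f(x) > f(v)$, the arc must be $(x,v)$, so $x \in N_D^-(v)$; there are at most $2$ such vertices because $d_D^-(v) \le 2$. Otherwise $x$ and $v$ have a common out-neighbour $w$ in $D$, so $x \in N_D^-(w) \setminus \{v\}$ for some $w \in N_D^+(v)$; since $d_D^+(v) \le j$ and $d_D^-(w) \le 2$, the total number of such $x$ is at most $\sum_{w \in N_D^+(v)} (d_D^-(w)-1) \le j$. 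Adding the two bounds gives at most $j+2$ neighbours of $v$ in $P(D)[S]$.

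There is essentially no obstacle here; the one point to handle carefully is that the common out-neighbour $w$ witnessing a competition edge $xv$ need not lie in $S$, so the bound on the number of competitors of $v$ inside $S$ must come from the indegrees of the out-neighbours of $v$ in $D$, not from any property of $S$. After establishing the local claim, I would close by invoking the equivalence mentioned above (or, equivalently, by noting that processing the vertices of $D$ in increasing order of $f$-value exhibits an elimination ordering in which each vertex has at most $j+2$ later neighbours in $P(D)$), which immediately yields that every $(2,j)$ phylogeny graph is $(j+2)$-degenerate.
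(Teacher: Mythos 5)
Your proposal is correct and follows essentially the same argument as the paper: take the vertex of least $f$-value in the induced subgraph, bound its incident $U(D)$-edges by the indegree bound $2$ (all such arcs must point into it), and bound its incident cared (competition) edges by $j$ using that each of its at most $j$ out-neighbours has at most one other in-neighbour. The paper's proof does exactly this counting, so no further comparison is needed.
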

\begin{proof}
Let $D$ be a $(2, j)$ digraph for a positive integer $j$ and $f$ be an acyclic labeling of $D$.
We take a subgraph $H$ of $P(D)$ and the vertex $u$ which has the least $f$-value in $V(H)$.
Then the out-neighbors of $u$ in $D$ cannot be in $V(H)$.
Thus an edge incident to $u$ in $H$ is either a cared edge or the edge in $U(D)$ corresponding to an arc incoming toward $u$ in $D$.
Since $u$ has at most $j$ out-neighbors and each of the out-neighbors has at most one in-neighbor other than $u$ in $D$, there are at most $j$ cared edges which are incident to $u$ in $H$.
Moreover, since $u$ has at most two in-neighbors in $D$, there are at most two edges incident to $u$ in $H$ which correspond to arcs incoming toward $u$ in $D$.
Thus $u$ has degree at most $j+2$ in $H$.
Since $H$ was arbitrarily chosen, $P(D)$ is $(j+2)$-degenerate.
\end{proof}
The following theorem gives a sharp upper bound for the clique number of $(2, j)$ phylogeny graph for any positive integer $j$ to extend the Theorem~\ref{thm:K5-free}.
\begin{Thm}\label{thm:cliquenumber}
Let $D$ be a $(2, j)$ digraph for a positive integer $j$. Then
\[
\omega(P(D)) \le
\begin{cases}
  j+2 & \mbox{if } j \le 2; \\
  j+3 & \mbox{otherwise;}
\end{cases}
\]
and the inequalities are tight.
\end{Thm}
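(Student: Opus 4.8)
The plan is to establish the upper bounds first and then exhibit extremal digraphs.

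For the upper bounds, note that Lemma~\ref{lem:degenerate} makes $P(D)$ a $(j+2)$-degenerate graph, and a $(j+2)$-degenerate graph cannot contain $K_{j+4}$ (each vertex of $K_{j+4}$ would have $j+3>j+2$ neighbours inside it, contradicting degeneracy), so $\omega(P(D))\le j+3$; this settles $j\ge 3$. For $j=2$, Theorem~\ref{thm:K5-free} gives that $P(D)$ is $K_5$-free, hence $\omega(P(D))\le 4=j+2$; for $j=1$, Lemma~\ref{lem:diamond-free chordal} gives that $P(D)$ is diamond-free, hence $K_4$-free, so $\omega(P(D))\le 3=j+2$ (one could instead invoke Theorem~\ref{thm:(i,1) realizable} with $i=2$). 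A uniform direct argument is also available: fixing an acyclic labeling $f$ and letting $x$ be the $f$-minimal vertex of a clique $X$, every other vertex of $X$ is either an in-neighbour of $x$ (at most $2$ of these) or shares a common out-neighbour with $x$ (and distinct such vertices need distinct common out-neighbours by the indegree bound, so at most $j$ of these), giving $|X|\le j+3$; ruling out equality when $j\le 2$ then reduces to a short case check on how an edge between two of the ``common out-neighbour'' vertices is realized.

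For tightness, I would give explicit digraphs. When $j=1$, the $(2,1)$ digraph on $\{x,a_1,a_2\}$ with arcs $(a_1,x),(a_2,x)$ has $P(D)=K_3$. When $j=2$, the $(2,2)$ digraph on $\{z,a,b,c,d\}$ with arcs $(b,a),(c,a),(a,z),(d,z),(c,b),(d,b)$ has $\{a,b,c,d\}$ inducing $K_4$ (the edges $ad$, $bc$, $cd$ being realized by the common out-neighbours $z$, $a$, $b$). For $j\ge 3$ I would proceed in two steps. First, construct directly a $(2,3)$ digraph whose phylogeny graph contains $K_6$: take a clique $\{x,a_1,a_2,b_1,b_2,b_3\}$ together with four sink helpers $z_1,z_2,z_3,z_4$, making $x$ have in-neighbours $a_1,a_2$ and out-neighbours $z_1,z_2,z_3$, making $b_k\to z_k$, and realizing the remaining clique edges by a carefully chosen set of arcs among $\{a_1,a_2,b_1,b_2,b_3\}$ with $z_4$ serving as one more common out-neighbour; one checks that all indegrees are at most $2$, all outdegrees are at most $3$, an acyclic labeling exists, and all $\binom{6}{2}$ clique edges appear. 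Second, for $j\ge 4$, I would iterate the following gluing step: given a $(2,j-1)$ digraph $D'$ with a clique $Q'$ of size $j+2$ in $P(D')$, form $D$ by adding a new vertex $x$, new sink vertices $z_1,\dots,z_j$, the arcs $(x,z_k)$ for all $k$, the arcs $(v_k,z_k)$ for $j$ of the vertices $v_k\in Q'$, and the arcs $(a_1,x),(a_2,x)$ for the remaining two vertices $a_1,a_2\in Q'$. Each vertex of $Q'$ gains at most one outgoing arc, so its outdegree stays at most $j$; no vertex of $D'$ receives a new arc; $x$ has indegree $2$ and outdegree $j$; each $z_k$ has indegree $2$; putting the $z_k$ lowest and then $x$ below all of $D'$ exhibits an acyclic labeling; and since $P(D')\subseteq P(D)$ while $x$ is adjacent in $P(D)$ to every $a_i$ (by an arc) and to every $v_k$ (by the common out-neighbour $z_k$), the set $Q'\cup\{x\}$ is a clique of size $j+3$ in $P(D)$. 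Starting the iteration from the $j=3$ construction yields, for every $j\ge 3$, a $(2,j)$ digraph $D$ with $\omega(P(D))=j+3$.

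I expect the only genuine obstacle to be the explicit $j=3$ construction (equivalently, the base of the iteration): there the $(j+2)$-degeneracy bound is met with no slack, so the orientation of the clique edges and the indegree restriction must be arranged simultaneously, and pinning down a valid arc set together with an acyclic labeling takes some trial and error. The gluing step and all the upper bound computations are routine.
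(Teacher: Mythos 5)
Your upper\-/bound argument is essentially the paper's: $(j+2)$-degeneracy (Lemma~\ref{lem:degenerate}) handles $j\ge 3$, Theorem~\ref{thm:K5-free} handles $j=2$, and Theorem~\ref{thm:(i,1) realizable} handles $j=1$; your direct counting argument with an acyclic labeling (at most $2$ in-neighbours of the $f$-minimal vertex of a clique plus at most $j$ vertices sharing pairwise-distinct common out-neighbours with it) is also correct. One slip: ``diamond-free, hence $K_4$-free'' is not a valid inference, because diamond-free only forbids \emph{induced} diamonds and $K_4$ contains no induced diamond; for $j=1$ you should rely on your parenthetical appeal to Theorem~\ref{thm:(i,1) realizable} (or directly on Lemma~\ref{lem:acyclic labeling}, which bounds maximal cliques by closed in-neighbourhoods), which does give $\omega\le 3$. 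For tightness with $j\le 2$ your two small digraphs are correct and play the role of the paper's Figure~\ref{fig:construct}.

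For $j\ge 3$ you genuinely diverge from the paper: the paper exhibits one uniform layered construction on $v_1,\dots,v_{j+3}$ with helper vertices $a_{\ell,i}$, while you induct on $j$ via a gluing step. Your gluing step is correct as stated (outdegrees in $Q'$ rise by at most one from the $(2,j-1)$ bound, no old vertex gains indegree, the new sinks have indegree two, and $Q'\cup\{x\}$ is a clique of size $j+3$), and note that it \emph{cannot} start below $j=3$ because a $(2,2)$ digraph has no $K_5$ in its phylogeny graph; so the entire claim for $j\ge 3$ rests on the $(2,3)$ base with $K_6$, which you only sketch and defer to ``trial and error.'' That is the one genuine gap, but it is fillable exactly within your skeleton: take vertices $x,a_1,a_2,b_1,b_2,b_3,z_1,z_2,z_3,z_4$ and arcs $(a_1,x)$, $(a_2,x)$, $(x,z_k)$ and $(b_k,z_k)$ for $k=1,2,3$, together with $(a_1,b_1)$, $(b_2,b_1)$, $(a_1,b_3)$, $(b_3,b_2)$, $(b_1,a_2)$, $(b_3,a_2)$, $(a_2,z_4)$, $(b_2,z_4)$. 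All indegrees are at most $2$, all outdegrees at most $3$, the order $a_1>b_3>b_2>b_1>a_2>x>z_1>z_2>z_3>z_4$ is an acyclic labeling, and the fifteen pairs from $\{x,a_1,a_2,b_1,b_2,b_3\}$ are all edges of the phylogeny graph ($a_1a_2$ via $x$, $xb_k$ via $z_k$, $a_1b_2$ via $b_1$, $a_2b_2$ via $z_4$, $b_1b_3$ via $a_2$, the rest by arcs). With this base written down, your induction proves tightness for every $j\ge 3$; the resulting proof is correct, just less self-contained than the paper's single explicit construction, in exchange for a cleaner inductive step.
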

\begin{proof}
It is known that if a graph $G$ is $k$-degenerate, then $\omega(G) \le k+1$.
Thus, by Lemma~\ref{lem:degenerate}, $\omega(P(D)) \le j+3$.
By Theorems~\ref{thm:(i,1) realizable} and \ref{thm:K5-free}, $\omega(P(D)) \le j+2$ if $j \le 2$.

The inequality is tight for $j \le 2$ by the digraphs given in Figure~\ref{fig:construct}.
\begin{figure}
\begin{center}
\begin{tikzpicture}[x=1.0cm, y=1.0cm]

    \vertex (b2) at (0,3) [label=left:$v_2$]{};
    \vertex (b3) at (2,2) [label=right:$v_3$]{};

    \vertex (b4) at (0,1) [label=left:$v_4$]{};

%    \vertex (b8) at (1.5, -0.5) [label=below:$D$]{};

    \path
 (b2) edge [->,thick] (b3)
 (b4) edge [->,thick] (b3)

;
% \draw (1, 0.5) node{$D$};
\end{tikzpicture}
\qquad \qquad
\begin{tikzpicture}[x=1.0cm, y=1.0cm]

    \vertex (b2) at (0,3) [label=left:$v_2$]{};
    \vertex (b3) at (2,3) [label=right:$v_3$]{};

    \vertex (b4) at (0,1) [label=left:$v_4$]{};
    \vertex (b5) at (2,1) [label=right:$v_5$]{};
    \vertex (b6) at (3,2) [label=right:$v_6$]{};

    \path
 (b2) edge [->,thick] (b3)
 (b2) edge [->,thick] (b5)
 (b4) edge [->,thick] (b3)
 (b4) edge [->,thick] (b5)
 (b3) edge [->,thick] (b6)
 (b5) edge [->,thick] (b6)
 ;

 %\draw (1.5,0.5) node{$D'$}
%	;
\end{tikzpicture}

\end{center}
\caption{A $(2, 1)$ digraph and $(2, 2)$ digraph whose phylogeny graphs contain $K_3$ and $K_4$, respectively.}
\label{fig:construct}
\end{figure}
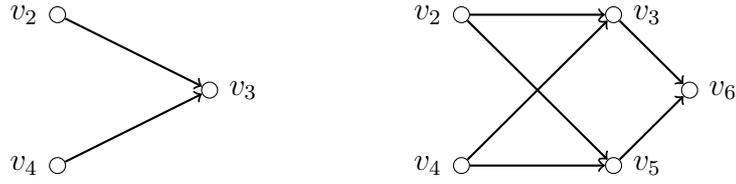
To show that the inequality is tight for $j \ge 3$, we construct a $(2, j)$ digraph in the following way.
We start with an empty digraph $D_0$ with vertex set $\{v_1, \ldots v_{j+3}\}$.
We add to $D_0$ the vertices $a_{1,2}, \ldots, a_{1, j+1}$ and the arcs $(v_1, a_{1,i})$, $(v_i,a_{1,i})$ for $i=2$, $\ldots$, $j+1$ and arcs $(v_{j+2},v_1)$, $(v_{j+3}, v_1)$ to obtain a digraph $D_1$. Then $D_1$ is a $(2,j)$-digraph with every vertex except $v_1$ having outdegree at most one and
\[
E_1:=\{v_{j+2}v_{j+3}\} \cup \{v_1v_i \mid i=2, \ldots, j+3\}
 \]
is an edge set of $P(D_1)$.
 %has the edges $v_{j+2}v_{j+3}$ and $v_1v_i$ for $j=2$, $\ldots$, $j+3$.
We add to $D_1$ the vertices $a_{2,3}, \ldots, a_{2, j-1}, a_{2, j+1}, a_{2, j+2}$ and the arcs $(v_2, a_{2,i})$, $(v_i,a_{2,i})$ for each $i \in [j+2] \setminus \{1,2,j\}$ and arcs $(v_{j},v_2)$, $(v_{j+3}, v_2)$ to obtain a digraph $D_2$. Then $D_2$ is a $(2,j)$-digraph with every vertex except $v_1$ and $v_2$ having outdegree at most two and
\[E_2:=E_1 \cup \{v_jv_{j+3}\} \cup \{v_2v_i \mid i=3, \ldots, j+3\}\]
is an edge set of $P(D_2)$.
% has the edges $v_jv_{j+3}$, $v_2v_i$ for $j=3$, $\ldots$, $j+3$ together with the edges in $P(D_1)$.

For each $\ell \in [j-1] \setminus \{1,2\}$, we add to $D_{\ell-1}$ the vertices $a_{\ell,\ell+1}, \ldots, a_{\ell, j+1}$ and the arcs $(v_\ell, a_{\ell,i})$, $(v_i,a_{\ell,i})$ for $i=\ell+1$, $\ldots$, $j+1$ and arcs $(v_{j+2},v_\ell)$, $(v_{j+3}, v_\ell)$ to obtain a digraph $D_\ell$.
Then, for each $\ell \in [j-1] \setminus \{1,2\}$ is a $(2,j)$-digraph with every vertex except $v_1$, $\ldots$, and $v_l$ having outdegree at most $\ell$ and
\[E_\ell:=E_{\ell-1}\cup \{v_\ell v_i \mid i=\ell+1, \ldots, j+3\}\]
is an edge set of $P(D_\ell)$.
 %has the edges  $v_\ell v_i$ for $i=\ell+1$, $\ldots$, $j+3$ together with the edges in $P(D_{\ell-1})$.
Therefore $v_i$ is adjacent to each of $v_1$, $\ldots$, $v_{j+3}$ except itself for $i=1$, $\ldots$, $j-1$.
Now we add to $D_{j-1}$ the arcs $(v_{j+3}, v_{j+1})$, $(v_{j+2}, v_{j})$, and $(v_{j+1}, v_{j})$ to obtain a $(2, j)$ digraph $D_{j}$.
Clearly, $v_j$, $\ldots$, $v_{j+3}$ are mutually adjacent in $P(D_j)$ (recall that the edges $v_{j+2}v_{j+3}$ and $v_jv_{j+3}$ are contained in $E_1$ and $E_2$, respectively).
Thus $v_1$, $\ldots$, $v_{j+3}$ form a clique of size $j+3$ in $P(D_j)$.
\end{proof}
From Theorems~\ref{thm:(1,j) realizable} and~\ref{thm:(i,1) realizable}, we know that the clique number of a $(1, j)$ phylogeny graph is at most two and the clique number of an $(i, 1)$ phylogeny graph is at most $i+1$ for any positive integers $i$ and $j$.

In the rest of this section, we shall show that the phylogeny graph $P(D)$ of a $(2,2)$ digraph $D$ is planar if the underlying graph of $D$ is chordal.

The following lemma is a known fact.

\begin{Lem}\label{lem:contraction}
The class of chordal graphs is closed under contraction.
\end{Lem}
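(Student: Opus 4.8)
The plan is to reduce immediately to a single edge contraction: contracting a connected subgraph amounts to contracting its edges one at a time, so it suffices to show that $G/uv$ is chordal whenever $G$ is chordal and $uv\in E(G)$. Write $w$ for the vertex of $G/uv$ obtained by identifying $u$ and $v$. The key observation is that $w$ is adjacent in $G/uv$ exactly to the vertices of $(N_G(u)\cup N_G(v))\setminus\{u,v\}$, while any two vertices other than $w$ are adjacent in $G/uv$ if and only if they are adjacent in $G$. I would then suppose, for contradiction, that $G/uv$ contains a hole $H$ and manufacture a hole of $G$.

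If $w\notin V(H)$, then $V(H)\subseteq V(G)\setminus\{u,v\}$ and adjacencies among these vertices agree in $G$ and in $G/uv$, so $H$ is already an induced cycle of length at least four in $G$ — a contradiction. Hence $w\in V(H)$, say $H=w\,x_1\,x_2\cdots x_k\,w$ with $k\ge 3$, so that $x_1$ and $x_k$ are the two neighbours of $w$ on $H$. Since $wx_1$ and $wx_k$ are edges of $G/uv$, each of $x_1,x_k$ is adjacent in $G$ to $u$ or to $v$. If $x_1$ and $x_k$ have a common neighbour in $\{u,v\}$, say both are adjacent to $u$, then $u\,x_1\,x_2\cdots x_k\,u$ is a cycle of length $k+1\ge 4$ in $G$, and it is induced: a chord among the $x_i$'s would be a chord of $H$, and an edge $ux_i$ with $1<i<k$ would give the chord $wx_i$ of $H$. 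Otherwise $N_G(x_1)\cap\{u,v\}$ and $N_G(x_k)\cap\{u,v\}$ are disjoint singletons, so up to swapping $u$ and $v$ we have $x_1\sim u$, $x_1\not\sim v$, $x_k\sim v$, $x_k\not\sim u$ in $G$; then, using the edge $uv$, the cycle $u\,x_1\,x_2\cdots x_k\,v\,u$ has length $k+2\ge 5$ and is induced in $G$ for the same reason (an edge from $u$ or $v$ to an internal $x_i$, or the edge $ux_k$ or $vx_1$, would be a chord of $H$). In every case we obtain a hole of $G$, contradicting chordality, so $G/uv$ is chordal.

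The proof is essentially bookkeeping; the only step that needs genuine care is the second subcase, where one must verify that the hypotheses $x_1\not\sim v$ and $x_k\not\sim u$ in $G$ are precisely what makes the pulled-back cycle $u\,x_1\cdots x_k\,v\,u$ chordless. As an alternative I would point out the shorter route through the subtree-intersection characterisation of chordal graphs: if $G$ is realised as the intersection graph of a family of subtrees of a tree $T$ and $uv\in E(G)$, then the subtrees representing $u$ and $v$ meet, so their union is again a subtree, and replacing those two subtrees by their union realises $G/uv$ as a subtree-intersection graph, hence chordal.
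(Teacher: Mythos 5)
Your proof is correct. The paper itself offers no argument for this lemma---it is quoted as ``a known fact''---so there is no proof of record to compare against; your write-up simply supplies the missing details. The reduction to a single edge contraction is legitimate, and the case analysis on how a hole $H$ of $G/uv$ passes through the merged vertex $w$ is complete: when $x_1$ and $x_k$ share a neighbour in $\{u,v\}$, the lifted cycle $u\,x_1\cdots x_k\,u$ has length at least four and you correctly exclude chords $ux_i$ ($1<i<k$) because they would project to chords $wx_i$ of $H$; in the remaining case the conditions $x_1\not\sim v$ and $x_k\not\sim u$ are exactly what eliminates the last potential chords of $u\,x_1\cdots x_k\,v\,u$, which has length at least five. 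The alternative route you sketch, via the representation of a chordal graph as the intersection graph of subtrees of a tree (the subtrees for $u$ and $v$ meet since $uv\in E(G)$, so their union is a subtree representing the merged vertex), is also valid and is arguably the shortest complete argument; either version would serve as a self-contained proof of the lemma the paper leaves uncited in detail.
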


We denote by $G \cdot e$ the graph obtained by contracting a graph $G$ by an edge $e$ in $G$.

\begin{Lem}\label{lem:contraction hole}
For a graph $G$ and two adjacent vertices $u$ and $v$ in $G$, let $K$ be a clique with at least three vertices in $G \cdot uv$.
If $z$ is the vertex in $K$ obtained by identifying $u$ and $v$, then one of the following is true:
\begin{itemize}
  \item $K \setminus \{z\} \subset N_G(u)$;
  \item $K \setminus \{z\} \subset N_G(v)$;
  \item the subgraph of $G$ induced by $(K \setminus \{z\}) \cup \{u, v\}$ contains a hole in $G$.
\end{itemize}
\end{Lem}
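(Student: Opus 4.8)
The plan is to analyze what happens to the common neighborhood of $u$ and $v$ under the contraction, splitting into cases according to how the vertices of $K \setminus \{z\}$ attach to the pair $\{u,v\}$ in $G$. First I would fix notation: write $S := K \setminus \{z\}$, so $|S| \ge 2$, and recall that for each $x \in S$, the adjacency $xz$ in $G \cdot uv$ means $x$ is adjacent in $G$ to $u$ or to $v$ (or both). Also, since $S \cup \{z\}$ is a clique in $G \cdot uv$ and contraction does not create edges among the unaffected vertices, $S$ is a clique in $G$. So the whole question is about which of $u,v$ each vertex of $S$ sees.

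If every vertex of $S$ is adjacent to $u$ in $G$, we are in the first bullet; if every vertex of $S$ is adjacent to $v$, we are in the second. So I would assume neither holds: there is a vertex $a \in S$ with $a \notin N_G(u)$ (hence $a \in N_G(v)$), and a vertex $b \in S$ with $b \notin N_G(v)$ (hence $b \in N_G(u)$). Note $a \ne b$. Now I would exhibit a hole in the subgraph of $G$ induced by $S \cup \{u,v\}$. The natural candidate cycle is $u\, b \cdots a\, v\, u$ — more precisely, since $S$ is a clique containing both $a$ and $b$, the four vertices $u, b, a, v$ form a cycle $u b a v u$ in $G$: the edges $ub$, $ba$, $av$ are present, and $vu$ is present by hypothesis. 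This $C_4$ has potential chords $ua$ and $vb$, but $ua \notin E(G)$ and $vb \notin E(G)$ by our choice of $a$ and $b$. Hence $u b a v u$ is an induced $4$-cycle in $G$, i.e.\ a hole, and it lies inside the subgraph induced by $(K \setminus \{z\}) \cup \{u,v\}$ since $\{u,v,a,b\} \subseteq S \cup \{u,v\}$. This gives the third bullet.

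The only subtlety to check carefully is that $u \ne v$ (given, since they are two adjacent vertices) and that $a, b$ are genuinely distinct from $u$ and $v$ — which holds because $S = K \setminus \{z\} \subseteq V(G \cdot uv) \setminus \{z\}$ corresponds to vertices of $G$ other than $u$ and $v$. There is no real obstacle here; the argument is a short case split, and the main thing to get right is the bookkeeping that "$x$ adjacent to $z$ in $G\cdot uv$" unpacks to "$x$ adjacent to $u$ or $v$ in $G$," together with the observation that the two potential chords of the exhibited $4$-cycle are exactly the two non-edges we extracted from the failure of the first two bullets. I would also remark (though it is not needed for the statement) that this lemma is the tool that, combined with Lemma~\ref{lem:contraction}, lets one transfer chordality across contractions while controlling newly formed cliques.
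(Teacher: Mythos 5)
Your proposal is correct and follows essentially the same route as the paper: assume the first two bullets fail, pick $a\in K\setminus\{z\}$ nonadjacent to $u$ and $b\in K\setminus\{z\}$ nonadjacent to $v$, and observe that $ubavu$ is an induced $4$-cycle (the paper's $uxwvu$), with your extra bookkeeping about $ab\in E(G)$ and the chord check being exactly the details the paper leaves implicit.
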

\begin{proof}
Suppose that $K \setminus \{z\} \not\subset N_G(u)$ and  $K \setminus \{z\} \not\subset N_G(v)$.
Then there is a vertex $w$ and $x$ in $K \setminus \{z\}$ such that $w$ is not adjacent to $u$ and $x$ is not adjacent to $v$.
Since $K$ is a clique in $G \cdot uv$, $w$ and $x$ are adjacent to $v$ and $u$, respectively, in $G$, and so $uxwvu$ is a hole in $G$.
\end{proof}

\begin{Lem}\label{lem:chordal K-minor-free}
A chordal graph $G$ is $K_{\omega(G)+1}$-minor-free.
\end{Lem}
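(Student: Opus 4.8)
The plan is to prove the equivalent statement that \emph{if a chordal graph $G$ has a $K_t$ minor, then $\omega(G)\ge t$}; applying this with $t=\omega(G)+1$ yields $\omega(G)\ge\omega(G)+1$, a contradiction, so $G$ is $K_{\omega(G)+1}$-minor-free. I would use the branch-set description of a minor: $G$ has a $K_t$ minor if and only if there are pairwise disjoint vertex sets $B_1,\dots,B_t$, each inducing a connected subgraph of $G$, such that for all $i\ne j$ some edge of $G$ joins $B_i$ and $B_j$. Given such sets, delete from $G$ every vertex not lying in some $B_i$, and then contract each $G[B_i]$ to a single vertex $z_i$ by successively contracting the edges of a spanning tree of $G[B_i]$. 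The resulting graph $G'$ has vertex set $\{z_1,\dots,z_t\}$ with $z_iz_j$ an edge for every pair $i\ne j$, i.e.\ $G'\cong K_t$, so $\omega(G')=t$.

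The crux is the claim that \emph{if $G$ is chordal and $e=uv$ is an edge of $G$, then $G\cdot e$ is chordal with $\omega(G\cdot e)\le\omega(G)$.} Chordality of $G\cdot e$ is exactly Lemma~\ref{lem:contraction}. For the clique number, let $z$ be the vertex of $G\cdot e$ obtained by identifying $u$ and $v$, and let $K$ be a maximum clique of $G\cdot e$. If $z\notin K$, then the vertices of $K$ lie in $V(G)\setminus\{u,v\}$ and are pairwise adjacent already in $G$, so $K$ is a clique of $G$. If $z\in K$ with $|K|\le 2$, then $|K|\le\omega(G)$, since every edge of $G\cdot e$ corresponds to an edge of $G$ and hence $\omega(G)\ge 2$ whenever $G\cdot e$ has an edge. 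If $z\in K$ with $|K|\ge 3$, then Lemma~\ref{lem:contraction hole} applies to $K$; as $G$ is chordal it has no hole, so the third alternative of that lemma is excluded, leaving $K\setminus\{z\}\subseteq N_G(u)$ or $K\setminus\{z\}\subseteq N_G(v)$. In the first case $(K\setminus\{z\})\cup\{u\}$ is a clique of $G$ of size $|K|$, and symmetrically in the second. In every case $|K|\le\omega(G)$, proving the claim.

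Now I would carry out the two reductions above one operation at a time. Each intermediate graph is an induced subgraph of, or a single-edge contraction of, a chordal graph, hence is chordal (induced subgraphs of chordal graphs are chordal; contractions are handled by Lemma~\ref{lem:contraction}); and at each step the clique number does not increase (deleting a vertex passes to a subgraph, and contracting an edge of a chordal graph does not increase $\omega$ by the claim). Therefore $\omega(G)\ge\omega(G')=t$, as desired. The only genuine work here is the claim that an edge contraction cannot raise the clique number of a chordal graph, and this is precisely what Lemma~\ref{lem:contraction hole} is set up to deliver; the one point requiring care is the bookkeeping over whether a clique of $G\cdot e$ contains the identified vertex $z$, together with the degenerate cases $|K|\le 2$.
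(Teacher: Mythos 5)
Your proof is correct, and it leans on exactly the same two ingredients as the paper -- Lemma~\ref{lem:contraction} for chordality under contraction and Lemma~\ref{lem:contraction hole} to force the contracted clique into $N_G(u)$ or $N_G(v)$ -- but you organize the argument differently. The paper picks an induced subgraph $H$ of $G$ from which $K_{\omega(G)+1}$ arises by contractions alone, minimizes the number of contractions, and derives a contradiction at the \emph{last} contraction: either the previous graph $L$ already contains $K_{\omega(G)+1}$ after deleting $u$ or $v$ (contradicting minimality) or $L$ has a hole (contradicting chordality). You instead prove the reusable monotonicity fact that for a chordal graph, contracting an edge does not increase the clique number, and then run a straightforward induction along a branch-set realization of the minor (vertex deletions plus one contraction at a time), concluding $\omega(G)\ge t$ for any $K_t$ minor. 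Your route requires the small bookkeeping you note (whether the identified vertex $z$ lies in the clique, and the degenerate cases $|K|\le 2$, which fall outside the hypothesis of Lemma~\ref{lem:contraction hole}), but it avoids the extremal choice of $H$ and the slightly delicate step in the paper where $L-u$ or $L-v$ being complete must be seen to contradict the minimality of the number of contractions; it also yields the slightly stronger statement that the clique number of a chordal graph is monotone under taking minors, which immediately gives the lemma. Both arguments are sound; yours is arguably the cleaner packaging of the same underlying idea.
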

\begin{proof}
Denote $\omega(G)$ by $\omega$ for simplicity's sake.
Suppose, to the contrary, that $G$ contains $K_{\omega+1}$ as a minor.
Then, since $K_{\omega+1}$ is complete, $G$ contains an induced subgraph $H$ such that $K_{\omega+1}$ is obtained from $H$ by only contraction.
Moreover, we may regard $H$ as an induced subgraph of $G$ for which the smallest number of contractions are required to obtain $K_{\omega+1}$.
Then, since $G$ is chordal, $H$ is also chordal.
Clearly $H$ is $K_{\omega+1}$-free, so at least one edge of $H$ is contracted to obtain $K_{\omega+1}$.
Let $uv$ be the last edge contracted to obtain $K_{\omega+1}$ from $H$.
Let $L$ be the second last graph obtained in the series of contractions to obtain $K_{\omega+1}$ from $H$, that is, $L \cdot uv=K_{\omega+1}$.
Then, by Lemma~\ref{lem:contraction hole}, $V(L) \setminus \{u, v\} \subset N_L(u)$ or $V(L) \setminus \{u, v\} \subset N_L(v)$ or $L$ contains a hole.
If $V(L) \setminus \{u, v\} \subset N_L(u)$ or $V(L) \setminus \{u, v\} \subset N_L(v)$, then $L - v$ or $L-u$ is isomorphic to $K_{\omega+1}$, which contradicts the choice of $H$.
Thus $V(L) \setminus \{u, v\} \not\subset N_L(u)$ and $V(L) \setminus \{u, v\} \not\subset N_L(v)$, and so $L$ contains a hole.
However, since $H$ is chordal, by Lemma~\ref{lem:contraction}, $L$ is chordal and we reach a contradiction.
\end{proof}

\begin{Thm}\label{thm:K_5-minor-free}
For a positive integer $j$ and a $(2,j)$ digraph, if its underlying graph is chordal, then its phylogeny graph is $K_{j+3}$-minor-free if $j \le 2$ and $K_{j+4}$-minor-free if $j \ge 3$.
\end{Thm}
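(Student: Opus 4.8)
The plan is to deduce the statement directly from three facts already established in the excerpt: the chordality of $P(D)$ when $U(D)$ is chordal (Theorem~\ref{thm:(2,j) chordal}), the clique-number bounds for $(2,j)$ phylogeny graphs (Theorem~\ref{thm:cliquenumber}), and the fact that a chordal graph $G$ is $K_{\omega(G)+1}$-minor-free (Lemma~\ref{lem:chordal K-minor-free}). Essentially, the work has already been done; what remains is to assemble these pieces in the right order.

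Concretely, I would first note that $D$ is a $(2,j)$ digraph, so in the notation of Theorem~\ref{thm:(2,j) chordal} we are in the case $i=2\le 2$; since $U(D)$ is assumed chordal, that theorem gives that $P(D)$ is chordal. Next I would invoke Theorem~\ref{thm:cliquenumber} to get $\omega(P(D))\le j+2$ when $j\le 2$ and $\omega(P(D))\le j+3$ when $j\ge 3$. Applying Lemma~\ref{lem:chordal K-minor-free} to the chordal graph $P(D)$ shows it is $K_{\omega(P(D))+1}$-minor-free. Finally, since a graph that is $K_m$-minor-free is automatically $K_{m'}$-minor-free for every $m'\ge m$ (a $K_{m'}$-minor would contain a $K_m$-minor), the bound $\omega(P(D))+1\le j+3$ for $j\le 2$ gives that $P(D)$ is $K_{j+3}$-minor-free, and the bound $\omega(P(D))+1\le j+4$ for $j\ge 3$ gives that $P(D)$ is $K_{j+4}$-minor-free, which is exactly the claim.

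There is no genuine obstacle in this argument: the difficulty has been pushed into the preceding results, notably the degeneracy and clique-number analysis behind Theorem~\ref{thm:cliquenumber} (which itself relies on the $K_5$-freeness of Theorem~\ref{thm:K5-free}) and the contraction lemmas behind Lemma~\ref{lem:chordal K-minor-free}. The only point needing a moment's care is the monotonicity of minor-freeness in the index and the bookkeeping of which clique-number estimate is used in which range of $j$. I expect this lemma to serve as a stepping stone toward the planarity of $P(D)$ for a $(2,2)$ digraph with chordal underlying graph: the case $j=2$ here yields $K_{j+3}=K_5$-minor-freeness, which will then be combined with a separate argument excluding a $K_{3,3}$-minor to conclude planarity.
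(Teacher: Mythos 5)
Your proposal is correct and follows essentially the same route as the paper: chordality of $P(D)$ via Theorem~\ref{thm:(2,j) chordal}, the clique-number bounds of Theorem~\ref{thm:cliquenumber}, and Lemma~\ref{lem:chordal K-minor-free}, with the monotonicity of $K_m$-minor-freedom handling the bookkeeping. Nothing is missing.
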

\begin{proof}
Let $D$ be a $(2, j)$ digraph for a positive integer $j$ whose underlying graph is chordal.
Then, by Theorem~\ref{thm:(2,j) chordal}, $P(D)$ is chordal.
Moreover,
\[
\omega(P(D)) \le
\begin{cases}
  j+2, & \mbox{if } j \le 2 \\
  j+3, & \mbox{otherwise}.
\end{cases}
\]
by Theorem~\ref{thm:cliquenumber}.
Thus $P(D)$ is $K_{j+3}$-free (resp.\ $K_{j+4}$-free) if $j \le 2$ (resp.\ $j \ge 3$).
By Lemma~\ref{lem:chordal K-minor-free}, $P(D)$ is $K_{j+3}$-minor-free (resp.\ $K_{j+4}$-minor-free) if $j \le 2$ (resp.\ $j \ge 3$).
\end{proof}

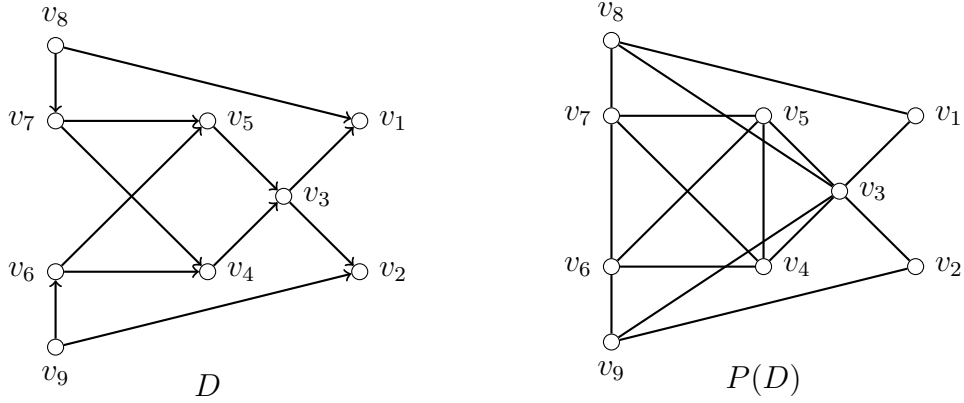
\begin{figure}
\begin{center}
\begin{tikzpicture}[x=1.0cm, y=1.0cm]

    \vertex (b2) at (0,3) [label=left:$v_7$]{};
    \vertex (b3) at (2,3) [label=right:$v_5$]{};

    \vertex (b4) at (0,1) [label=left:$v_6$]{};
    \vertex (b5) at (2,1) [label=right:$v_4$]{};
    \vertex (b6) at (3,2) [label=right:$v_3$]{};
    \vertex (b7) at (0,4) [label=above:$v_8$]{};
    \vertex (b8) at (0,0) [label=below:$v_9$]{};
    \vertex (b9) at (4,3) [label=right:$v_1$]{};
    \vertex (b10) at (4,1) [label=right:$v_2$]{};

    \path
 (b2) edge [->,thick] (b3)
 (b2) edge [->,thick] (b5)
 (b4) edge [->,thick] (b3)
 (b4) edge [->,thick] (b5)
 (b3) edge [->,thick] (b6)
 (b5) edge [->,thick] (b6)
 (b7) edge [->,thick] (b2)
 (b8) edge [->,thick] (b4)
 (b7) edge [->,thick] (b9)
 (b8) edge [->,thick] (b10)
 (b6) edge [->,thick] (b9)
 (b6) edge [->,thick] (b10)
 ;
 \draw (2, -0.5) node{$D$};
\end{tikzpicture}
\qquad \qquad
\begin{tikzpicture}[x=1.0cm, y=1.0cm]

    \vertex (b2) at (0,3) [label=left:$v_7$]{};
    \vertex (b3) at (2,3) [label=right:$v_5$]{};

    \vertex (b4) at (0,1) [label=left:$v_6$]{};
    \vertex (b5) at (2,1) [label=right:$v_4$]{};
    \vertex (b6) at (3,2) [label=right:$v_3$]{};
    \vertex (b7) at (0,4) [label=above:$v_8$]{};
    \vertex (b8) at (0,0) [label=below:$v_9$]{};
    \vertex (b9) at (4,3) [label=right:$v_1$]{};
    \vertex (b10) at (4,1) [label=right:$v_2$]{};

    \path
 (b2) edge [-,thick] (b3)
 (b2) edge [-,thick] (b5)
 (b4) edge [-,thick] (b3)
 (b4) edge [-,thick] (b5)
 (b3) edge [-,thick] (b6)
 (b5) edge [-,thick] (b6)
 (b7) edge [-,thick] (b2)
 (b8) edge [-,thick] (b4)
 (b7) edge [-,thick] (b9)
 (b8) edge [-,thick] (b10)
 (b6) edge [-,thick] (b9)
 (b6) edge [-,thick] (b10)

 (b2) edge [-,thick] (b4)
 (b3) edge [-,thick] (b5)
 (b7) edge [-,thick] (b6)
 (b8) edge [-,thick] (b6)
 ;
\draw (2,-0.5) node{$P(D)$};
%	;
\end{tikzpicture}

\end{center}
\caption{A $(2, 2)$ digraph $D$ whose phylogeny graph contains $K_5$ as a minor.}
\label{fig:notplanar}
\end{figure}
The above theorem is false for a $(2, j)$ digraph whose underlying graph is non-chordal (see Figure~\ref{fig:notplanar}).

\begin{Cor}\label{cor:K_5-minor-free}
If the underlying graph of a $(2,2)$ digraph is chordal, then its phylogeny graph is $K_{5}$-minor-free.
\end{Cor}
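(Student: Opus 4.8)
The plan is to read off Corollary~\ref{cor:K_5-minor-free} as the special case $j=2$ of Theorem~\ref{thm:K_5-minor-free}. A $(2,2)$ digraph is, by definition, a $(2,j)$ digraph with $j=2$, so if its underlying graph is chordal, Theorem~\ref{thm:K_5-minor-free} applies verbatim. Since $j=2\le 2$, the theorem yields that the phylogeny graph is $K_{j+3}$-minor-free, and $j+3=5$; this is precisely the assertion of the corollary. So the entire proof is a single invocation of the preceding theorem, and there is no real obstacle to overcome.

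If instead one wanted a self-contained argument (not relying on Theorem~\ref{thm:K_5-minor-free} itself), the route would be: first apply Theorem~\ref{thm:(2,j) chordal} with $i=2$ to conclude $P(D)$ is chordal; then apply Theorem~\ref{thm:cliquenumber} (equivalently Theorem~\ref{thm:K5-free}) to get $\omega(P(D))\le 4$; and finally apply Lemma~\ref{lem:chordal K-minor-free}, which says a chordal graph $G$ is $K_{\omega(G)+1}$-minor-free, so that a chordal graph with clique number at most $4$ is $K_5$-minor-free. The only mildly nontrivial ingredient in this chain, Lemma~\ref{lem:chordal K-minor-free}, is already established in the excerpt, so either way the corollary is immediate; I would present it simply as a direct consequence of Theorem~\ref{thm:K_5-minor-free}.
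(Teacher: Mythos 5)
Your proposal is correct and matches the paper exactly: the corollary is just the case $j=2$ of Theorem~\ref{thm:K_5-minor-free}, giving $K_{j+3}=K_5$-minor-freeness. Your alternative self-contained chain (Theorem~\ref{thm:(2,j) chordal}, then the clique-number bound, then Lemma~\ref{lem:chordal K-minor-free}) is precisely how the paper proves that theorem anyway, so there is no substantive difference.
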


In the following, we show that the phylogeny graph of $(2,2)$ digraph whose underlying graph is chordal is $K_{3,3}$-minor-free.

The \emph{join} of two graphs $G_1$ and $G_2$ is denoted by $G_1 \vee G_2$ and has the vertex set $V(G_1) \cup V(G_2)$ and the edge set $E(G_1)\cup E(G_2) \cup \{xy \mid x \in G_1 \text{ and } y \in G_2 \}$.
Let $I_n$ denote a set of $n$ isolated vertices in a graph for a positive integer $n$.

\begin{Lem}\label{lem:(K_3,I_3)-minor-free}
For any $(2,2)$ digraphs, if its underlying graph is chordal, then its phylogeny graph is $K_3 \vee I_3$-minor-free.
\end{Lem}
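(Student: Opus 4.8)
The plan is a proof by contradiction: suppose $U(D)$ is chordal yet $P(D)$ has $K_3\vee I_3$ as a minor, and derive a contradiction. Write $K_3\vee I_3$ with hub triangle on $a,b,c$ and fan vertices $x,y,z$, each of $x,y,z$ adjacent to each of $a,b,c$, and fix a minor model: pairwise disjoint connected sets $V_a,V_b,V_c,V_x,V_y,V_z\subseteq V(P(D))$ together with one chosen $P(D)$-edge between $V_s$ and $V_t$ for each required adjacency $st$ (nine edges in all), chosen so that $\sum_\bullet|V_\bullet|$ is minimum. Minimality forces each $V_\bullet$ to induce a tree in $P(D)$ every leaf of which is an endpoint of one of the nine chosen edges, so the trees on $V_x,V_y,V_z$ have at most three leaves and those on $V_a,V_b,V_c$ at most five. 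Since $P(D)$ is chordal with $\omega(P(D))\le 4$ and $K_5$-minor-free (Theorems~\ref{thm:(2,j) chordal} and~\ref{thm:cliquenumber} and Corollary~\ref{cor:K_5-minor-free}), I would first normalise the model further -- for instance so that no two branch sets meet a common maximal clique of $P(D)$ and no four of them are pairwise joined inside one maximal clique -- which keeps the subsequent analysis finite.

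Next I would transport the model into $U(D)$. Each of the nine chosen edges is either an edge of $U(D)$ or a cared edge; for a cared edge $e$ with caring vertex $w_e$, the triangle-freeness reasoning used for sets extending a hole in Section~3 (the analogue of~\eqref{eqn:caringvertex} together with the indegree-$\le 2$ restriction) shows the $w_e$'s are pairwise distinct and lie outside $\bigcup_\bullet V_\bullet$, so replacing each such $e$ by the length-two path through $w_e$ turns the nine connections into nine internally disjoint paths of $U(D)$; similarly, each spanning tree of a $V_\bullet$ can be repaired inside $U(D)$ by rerouting its cared edges through caring vertices, the $(2,2)$-degree bounds again controlling vertex reuse. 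This produces a connected subgraph $L$ of $U(D)$ carrying a topological $K_3\vee I_3$-like pattern. Fixing an acyclic labeling $f$ of $D$ and imitating Lemma~\ref{lem:least f-value in a hole}, I would then examine the vertex of least $f$-value lying on the cycles of $L$ and use that its indegree in $D$ is at most two to bound how many of the chords that $U(D)$ must possess can be incident to it.

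Finally I would force the contradiction. Two of the hubs, say $a$ and $b$, are joined through all three fans, so after the transport $V_a$ and $V_b$ are linked in $U(D)$ by three internally disjoint paths, at least two of length two or more (some cared edge or nontrivial tree path necessarily occurs among them, since otherwise the three fan-connections would all degenerate to edges meeting a common maximal clique of $P(D)$, which the $(2,2)$ restriction and the earlier normalisation forbid); two such paths combine into a cycle $C$ of $U(D)$ of length at least four. Since $U(D)$ is chordal, $C$ must have chords; tracking these forced chords -- via Lemma~\ref{lem:chord} applied to chordless sections of $C$, and via the acyclic-labeling analysis above -- I would show that either a chordless cycle of length at least four survives in $U(D)$, directly contradicting chordality, or the accumulated forced edges push some vertex of $D$ past indegree or outdegree $2$, or they create a $K_5$ minor in $P(D)$ (contradicting Corollary~\ref{cor:K_5-minor-free}), or a directed cycle in $D$ -- a contradiction in every case. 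The real obstacle is precisely this last step together with the transport bookkeeping: one must rule out every way the $K_3\vee I_3$ pattern could collapse so that all the would-be holes of $U(D)$ acquire chords, and it is exactly there that the outdegree bound (not just the indegree bound that already handled Theorem~\ref{thm:(2,j) chordal}) is essential. In effect the proof strengthens the single-hole machinery of Lemmas~\ref{lem:chord}--\ref{lem:least f-value in a hole} to the much denser $K_3\vee I_3$ configuration; alternatively one could carry the normalisation far enough to reduce to a finite list of ``core'' configurations and dispatch each by hand.
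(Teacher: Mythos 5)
There is a genuine gap: your text is a plan that names the hard step and then defers it, rather than a proof. The decisive part --- ``tracking these forced chords \dots I would show that either a chordless cycle of length at least four survives in $U(D)$, \dots or they create a $K_5$ minor, or a directed cycle'' --- is exactly the content that has to be proved, and you acknowledge yourself that this bookkeeping is the real obstacle without resolving it. Moreover, the transport step is not sound as stated: the claim that the caring vertices of the nine chosen edges ``lie outside $\bigcup_\bullet V_\bullet$'' is justified in the paper only for edges lying on a hole (Proposition~\ref{prop:not on H} and~\eqref{eqn:caringvertex}, where a caring vertex on the hole would force a chord); for an arbitrary minor model nothing prevents a caring vertex from sitting inside a hub branch set or coinciding with a vertex used to repair a tree, so the ``nine internally disjoint paths in $U(D)$'' are not guaranteed, and the subsequent cycle-and-chord hunt has no firm starting point.

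The paper's proof avoids this transport altogether and is worth comparing. It first handles the case where $K_3\vee I_3$ occurs as a subgraph of $P(D)$: such a copy must be induced (else $K_5\subseteq P(D)$), and then a sharp analysis of the six-vertex configuration --- using an acyclic labeling, the indegree/outdegree bounds of the $(2,2)$ digraph, and the location of cared edges incident to the hub vertex of largest label --- produces an explicit hole ($x_1y_1x_3yx_1$ or $x_1x_2y_1x_3yx_1$) in $U(D)$, contradicting chordality. It then reduces the general minor case to this subgraph case: edge deletions are excluded by $K_5$-minor-freeness (Corollary~\ref{cor:K_5-minor-free}), and a minimal sequence of contractions is shown to be shortenable using Lemma~\ref{lem:contraction hole} together with the fact that chordality is preserved under contraction (Lemma~\ref{lem:contraction}), so the last contraction was never needed. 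To repair your argument you would either have to carry out the finite case analysis you allude to, or adopt this reduction: work with the contraction-minimal graph, apply Lemma~\ref{lem:contraction hole} to undo the last contraction, and then supply the induced-subgraph analysis in $D$ itself --- which is precisely where the outdegree bound and the acyclic labeling do their work.
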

\begin{proof}
Let $G$ be the phylogeny graph of a $(2, 2)$ digraph $D$ whose underlying graph is chordal.
Then, by Theorem~\ref{thm:(2,j) chordal} and Corollary~\ref{cor:K_5-minor-free}, $G$ is chordal and $K_5$-minor-free.
%Therefore we can't obtain $K_3 \vee I_3$ from $G$ by deleting edges and vertices.

Suppose, to the contrary, that $K_3 \vee I_3$ is a minor of $G$.
Then $G$ contains a subgraph $H$ such that either $H=K_3 \vee I_3$ or $K_3 \vee I_3$ is obtained from $H$ by using edge deletions or contractions.
Let $f$ be an acyclic labeling of $D$.

Suppose that $H=K_3 \vee I_3$.
If $H$ is not an induced subgraph of $G$, then two vertices of $I_3$ are adjacent in $G$, and so $K_5$ is a subgraph of $G$, which is impossible.
Thus $H$ is an induced subgraph of $G$.
We denote the vertices of $K_3$ in $H$ by $x_1, x_2, x_3$ and the vertices of $I_3$ in $H$ by  $y_1, y_2, y_3$. We may assume that $f(x_1)<f(x_2)<f(x_3)$ and $f(y_1)<f(y_2)<f(y_3)$.

If $f(x_1) < f(y_1)$, then the outdegree of $x_1$ in the subdigraph $D_H$ of $D$ induced by $V(H)$ is zero, which implies $d_H(x_1) \le 4$ (recall that $D$ is a $(2,2)$ digraph), a contradiction.
Thus
\begin{equation}\label{eq:inequalities}
f(y_1)<f(x_1)<f(x_2)<f(x_3) \quad \text{and} \quad f(y_1)<f(y_2)<f(y_3).
\end{equation}
If $x_3$ has two in-neighbors in $D_H$, then they must be $y_2$ and $y_3$, which implies their being adjacent in $G$, a contradiction.
Therefore $x_3$ has at most one in-neighbor in $D_H$.
Since $D$ is a $(2,2)$ digraph and $d_H(x_3)=5$, $x_3$ has exactly one in-neighbor and two out-neighbors in $D_H$, and two cared edges in $H$ are incident to $x_3$.
The in-neighbor of $x_3$ in $D_H$ is $y_2$ or $y_3$ by \eqref{eq:inequalities}.

Let $y$ be the in-neighbor of $x_3$ in $D_H$.
Then $y \in \{y_2, y_3\}$ and $f(y) > f(x_3)$. Thus, by \eqref{eq:inequalities}, none of  $x_1$, $x_2$, and $y_1$ is an in-neighbor of $y$ in $D_H$.
Since $D$ is a $(2, 2)$ digraph, $y$ has at most one out-neighbor other than $x_3$ in $D_H$. Then, since $d_H(y)=3$, by \eqref{eq:inequalities}, one of $x_2y$ and $x_1y$ is a cared edge in $G$ taken care of by $x_1$ or $x_2$.
Since $f(x_1) < f(x_2)$, $x_2y$ is a cared edge in $G$ taken care of by $x_1$.

Let $v$ be a vertex joined to $x_3$ by a cared edge in $H$.
Then $x_3$ and $v$ have a common out-neighbor in $D$.
Since $x_3$ has all of its two out-neighbors in $D_H$, the common out-neighbors of $x_3$ and $v$ should be in $H$.
Since there are two cared edges incident to $x_3$ in $H$, the two out-neighbors of $x_3$ take care of those two cared edges incident to $x_3$.
Since $y_1$ has the least $f$-value among the vertices in $H$, $y_1$ cannot be none of the other ends of two cared edges incident to $x_3$ in $G$.
Hence $y_1$ must be one of the two out-neighbors of $x_3$ in $D_H$  which takes care of a cared edge incident to $x_3$.
Since $\{y_1, y_2, y_3\}$ is an independent set in $G$, neither $y_2$ nor $y_3$ can be an in-neighbor of $y_1$ in $D_H$.
Thus $x_1$ or $x_2$ is the vertex joined to $x_3$ which is taken care of by $y_1$ in $D_H$.

If $x_1$ is an in-neighbor of $y_1$ in $D_H$, then $x_1y_1x_3yx_1$ is a hole in $U(D)$ since $\{y, y_1\} \subset I_3$ and $x_1x_3$ is a cared edge in $G$ which is not an edge in $U(D)$.
Thus $x_2$ is an in-neighbor of $y_1$ in $D_H$.
In the following, we shall claim that $x_1x_2y_1x_3yx_1$ is a hole in $U(D)$ to reach a contradiction.
Since $\{y, y_1\} \subset I_3$, $y$ and $y_1$ are not adjacent in $U(D)$.
Since $x_2x_3$ is a cared edge in $G$, $x_2$ and $x_3$ are not adjacent in $U(D)$.
If $x_1x_3$ is an edge of $U(D)$, then there is an arc $(x_3, x_1)$ since $f(x_1) < f(x_3)$, which contradicts the indegree condition on $x_1$.
Therefore $x_1$ and $x_3$ are not adjacent in $U(D)$.
By applying a similar argument, we may show that neither $x_1$ and $y_1$ nor $y$ and $x_2$ are adjacent in $U(D)$.

Thus $H \neq K_3 \vee I_3$ and so $K_3 \vee I_3$ is obtained from $H$ by using edge deletions or contractions.
Then, $K_3 \vee I_3$ may be obtained from the subgraph of $G$ induced by $V(H)$ by using edge deletions or contractions, so we may assume that $H$ as an induced subgraph of $G$.
Then $H$ is chordal.
If an edge deletion was required to obtain $K_3 \vee I_3$ from $H$, then it would mean that $G$ contains $K_5$ as a minor, which is impossible.
Thus, we may assume that $K_3 \vee I_3$ is obtained from $H$ by only contractions.

Let $H^*$ be a graph obtained from $H$ by applying the smallest number of contractions to contain $K_3 \vee I_3$ as a subgraph.
Since $H$ is chordal, $H^*$ is chordal by Lemma~\ref{lem:contraction}.

Let $x_1, x_2, x_3$ be the vertices of $K_3$ and $y_1, y_2, y_3$ be the vertices of $I_3$ for $K_3 \vee I_3$ contained in $H^*$.
Let $H'$ be the graph to which the last contraction is applied in the process of obtaining $H^*$ and $e=uv$ be the edge contracted lastly.
Then $H'$ is chordal by Lemma~\ref{lem:contraction}.
By the choice of $H^*$, $u$ and $v$ are identified to become a vertex in $\{x_1, x_2, x_3, y_1, y_2, y_3\}$.

{\it Case 1}. The vertices $u$ and $v$ are identified to become one of $y_1$, $y_2$, $y_3$.
Without loss of generality, we may assume that $u$ and $v$ are identified to become the vertex $y_3$.
By Lemma~\ref{lem:contraction hole}, $\{x_1, x_2, x_3\} \subset N_{H'}(u)$ or $\{x_1, x_2, x_3\} \subset N_{H'}(v)$ or $\{x_1, x_2, x_3, u, v\}$ contains a hole in $H'$.
Since $H'$ is chordal, $\{x_1, x_2, x_3\} \subset N_{H'}(u)$ or $\{x_1, x_2, x_3\} \subset N_{H'}(v)$.
Then $\{x_1, x_2, x_3, y_1, y_2, u\}$ or  $\{x_1, x_2, x_3, y_1, y_2, v\}$ forms $K_3 \vee I_3$ in $H'$, which contradicts the choice of $H^*$.

{\it Case 2}. The vertices $u$ and $v$ are identified to become one of $x_1$, $x_2$, $x_3$. Then each of $y_1$, $y_2$, $y_3$ is adjacent to one of $u$, $v$ in $H'$.
Without loss of generality, we may assume that $u$ and $v$ are identified to become the vertex $x_3$.
By Lemma~\ref{lem:contraction hole}, $\{x_1, x_2\} \subset N_{H'}(u)$ or $\{x_1, x_2\} \subset N_{H'}(v)$ or $\{x_1, x_2, u, v\}$ contains a hole in $H'$.
Since $H'$ is chordal, $\{x_1, x_2\} \subset N_{H'}(u)$ or $\{x_1, x_2\} \subset N_{H'}(v)$.
Without loss of generality, we may assume that $\{x_1, x_2\} \subset N_{H'}(u)$.
If $u$ is adjacent to each of $y_1$, $y_2$, $y_3$, then $\{x_1, x_2, u, y_1, y_2, y_3\}$ forms $K_3 \vee I_3$ in $H'$, a contradiction to the choice of $H^*$.
Thus $u$ is not adjacent to one of $y_1$, $y_2$, $y_3$ in $H'$.
Without loss of generality, we may assume that $u$ is not adjacent to $y_3$ in $H'$.
Then $v$ is adjacent to $y_3$ in $H'$.
If $v$ is not adjacent to one of $x_1$ and $x_2$, then  $x_1y_3vux_1$ or $x_2y_3vux_2$ is a hole in $H'$ and we reach a contradiction.
Thus $v$ is adjacent to $x_1$ and $x_2$.
If one of $y_1$, $y_2$ is adjacent to both of $u$ and $v$, then $x_1$, $x_2$, $ u$, and $v$ together with it form $K_5$ in $H'$, a contradiction.
Therefore $\{ N_{H'}(u) \cap \{y_1, y_2, y_3\}, N_{H'}(v) \cap \{y_1, y_2, y_3\} \}$ is a partition of $\{y_1, y_2, y_3\}$.
Thus $|N_{H'}(u) \cap \{y_1, y_2, y_3\}| + | N_{H'}(v) \cap \{y_1, y_2, y_3\}| = 3$.
Without loss of generality, we may assume that $|N_{H'}(u) \cap \{y_1, y_2, y_3\}|=1$. Then $\{x_1, x_2, v, y_1, y_2,y_3, u\} \setminus (N_{H'}(u) \cap \{y_1, y_2, y_3\})$ forms $K_3 \vee I_3$ in $H'$ and we reach a contradiction.
%If $| N_{H'}(v) \cap \{y_1, y_2, y_3\}|=1$, then $N_{H'}(v) \cap \{y_1, y_2, y_3\}=\{y_3\}$ and so $\{x_1, x_2, u, y_1, y_2, v\}$} forms $K_3 \vee I_3$ {\color{red}in $H'$} and we reach a contradiction. {\color{red}If
\end{proof}

\begin{Thm}\label{thm:K_3,3-minor-free}
For any $(2,2)$ digraph $D$, if the underlying graph of $D$ is chordal, then the phylogeny graph of $D$ is $K_{3,3}$-minor-free.
\end{Thm}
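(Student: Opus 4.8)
The plan is to reduce the desired $K_{3,3}$-minor-freeness of $P(D)$ to facts already established: $P(D)$ is chordal (Theorem~\ref{thm:(2,j) chordal}) and $P(D)$ is $K_3 \vee I_3$-minor-free (Lemma~\ref{lem:(K_3,I_3)-minor-free}). The whole theorem will follow once I prove the purely graph-theoretic claim that \emph{every chordal graph which has $K_{3,3}$ as a minor also has $K_3 \vee I_3$ as a minor}: indeed, if $P(D)$ had a $K_{3,3}$ minor it would then have a $K_3 \vee I_3$ minor, contradicting Lemma~\ref{lem:(K_3,I_3)-minor-free}. (Combined with Corollary~\ref{cor:K_5-minor-free}, this is exactly what is needed for the planarity statement via Wagner's characterization.)

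To prove the claim, I would start from a minor model of $K_{3,3}$ in a chordal graph $G$: pairwise disjoint, connected branch sets $A_1, A_2, A_3, B_1, B_2, B_3 \subseteq V(G)$ with at least one edge of $G$ joining $A_i$ and $B_j$ for every $i$ and $j$. First delete from $G$ all vertices lying outside $A_1 \cup \cdots \cup B_3$, and then contract each branch set to a single vertex $a_i$, respectively $b_j$, \emph{without} removing any of the surviving edges (loops and parallel edges created along the way are simply discarded). Vertex deletion preserves chordality and edge contraction preserves chordality by Lemma~\ref{lem:contraction}, so the resulting graph $G'$ is a chordal graph on the six vertices $a_1, a_2, a_3, b_1, b_2, b_3$ containing all nine edges $a_i b_j$, i.e.\ having $K_{3,3}$ as a spanning subgraph; and $G'$ is a minor of $G$.

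Now I would exploit chordality of $G'$ to force a triangle on one side. For any $i \neq j$ and $k \neq l$ the cycle $a_i b_k a_j b_l a_i$ is a $4$-cycle in $G'$, hence has a chord, and its only candidate chords are $a_i a_j$ and $b_k b_l$; so $a_i a_j \in E(G')$ or $b_k b_l \in E(G')$. If $\{a_1, a_2, a_3\}$ is not a clique, say $a_1 a_2 \notin E(G')$, then applying this with $\{i,j\} = \{1,2\}$ and each $2$-subset $\{k,l\} \subseteq \{1,2,3\}$ forces $\{b_1, b_2, b_3\}$ to be a clique; thus in every case one of $\{a_1,a_2,a_3\}$, $\{b_1,b_2,b_3\}$ is a triangle in $G'$. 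That triangle together with the other three vertices — each adjacent to all vertices of the triangle through the $K_{3,3}$ edges, and with any edges among the three ignored — is a copy of $K_3 \vee I_3$ inside $G'$. Hence $G$ has $K_3 \vee I_3$ as a minor, proving the claim and the theorem.

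I do not anticipate a genuine obstacle; the only points needing care are the bookkeeping in the reduction step (checking that simultaneously contracting the branch sets while keeping the other edges yields a well-defined simple chordal graph, so loops and multiedges are harmless) and noting that the weak consequence of chordality I use — "every $4$-cycle among the $K_{3,3}$-edges has a chord" — already suffices, since I only need a single triangle on one side rather than full chordality of $G'$. An alternative, heavier route would be to mimic the proof of Lemma~\ref{lem:(K_3,I_3)-minor-free} (pass to a minimal subgraph realizing the $K_{3,3}$ minor, argue it may be taken induced and obtained by contractions only, and analyze the last contraction), but the reduction above seems cleaner.
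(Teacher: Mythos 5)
Your proposal is correct and follows essentially the same route as the paper: reduce to a six-vertex chordal graph containing $K_{3,3}$ as a spanning subgraph (via vertex deletions and contractions, which preserve chordality by Lemma~\ref{lem:contraction}), use chordality of that graph to force one side of the bipartition to be a clique, and thereby obtain a $K_3 \vee I_3$ minor contradicting Lemma~\ref{lem:(K_3,I_3)-minor-free}. Your per-$4$-cycle chord analysis is just an expanded version of the paper's single observation that a nonadjacent pair on each side would yield a hole, so no substantive difference remains.
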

\begin{proof}
Suppose, to the contrary, that $K_{3,3}$ is a minor of $P(D)$.
Then $K_{3, 3}$ is obtained from $P(D)$ by edge deletions or vertex deletions or contractions.
Let $(X,Y)$ be a bipartition of $K_{3,3}$.
Among the edge deletions, the vertex deletions, and the contractions to obtain $K_{3,3}$ from $P(D)$, we only take all the vertex deletions and all the contractions and apply them in the same order as the order in which vertex deletions and contractions applied to obtain $K_{3,3}$ from $P(D)$. Let $H^*$ be a graph obtained from $P(D)$ in this way.
Then $H^*$ contains $K_{3,3}$ as a spanning subgraph.
In addition, since $P(D)$ is chordal, $H^*$ is chordal by Lemma~\ref{lem:contraction} (it is clear that the chordality is preserved under vertex deletions).
If there is a pair of nonadjacent vertices in $H^*$ in each of $X$ and $Y$, then those four vertices form a hole in $H^*$ and we reach a contradiction.
Thus $X$ or $Y$ forms a clique in $H^*$ and so $H^*$ contains $K_3 \vee I_3$ as a spanning subgraph.
Then $K_3 \vee I_3$ is a minor of $P(D)$, which contradicts Lemma~\ref{lem:(K_3,I_3)-minor-free}.
Hence $P(D)$ is $K_{3,3}$-minor-free.
\end{proof}

\begin{Thm}\label{thm:planar}
For any $(2,2)$ digraphs, if its underlying graph is chordal, then its phylogeny graph is chordal and planar.
\end{Thm}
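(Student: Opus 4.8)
The plan is to reduce the whole statement to facts already assembled, so the proof itself should be a single short paragraph. Chordality is immediate: it is exactly Theorem~\ref{thm:(2,j) chordal} read off at $j=2$ (and it is in any case presupposed by Theorem~\ref{thm:K_5-minor-free}). For planarity I would appeal to the classical theorem of Wagner (equivalently Kuratowski), which characterizes planar graphs as precisely those having neither $K_5$ nor $K_{3,3}$ as a minor.

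So the two inputs to Wagner's criterion are already on the table. First, the phylogeny graph of a $(2,2)$ digraph with chordal underlying graph is $K_5$-minor-free: this is Corollary~\ref{cor:K_5-minor-free}, itself the case $j=2$ of Theorem~\ref{thm:K_5-minor-free} (there $\omega(P(D))\le 4$ by Theorem~\ref{thm:cliquenumber} forces $K_5$-freeness, and Lemma~\ref{lem:chordal K-minor-free} upgrades $K_5$-freeness to $K_5$-minor-freeness since $P(D)$ is chordal). Second, it is $K_{3,3}$-minor-free by Theorem~\ref{thm:K_3,3-minor-free}. Feeding these two facts into Wagner's theorem gives planarity, and combined with chordality this is precisely the statement of Theorem~\ref{thm:planar}.

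The substance, of course, lies not in this final assembly but in the supporting results, and I expect the $K_{3,3}$-minor-free half to carry essentially all of the difficulty — concretely Lemma~\ref{lem:(K_3,I_3)-minor-free}, which excludes $K_3 \vee I_3$ as a minor. The passage in Theorem~\ref{thm:K_3,3-minor-free} from "$K_{3,3}$-minor" to "$K_3 \vee I_3$-minor" is the usual chordality trick (a nonadjacent pair on each side of the bipartition would yield an induced $C_4$), so the crux is the casework in Lemma~\ref{lem:(K_3,I_3)-minor-free}: handling $H=K_3\vee I_3$ directly through an acyclic labeling and the indegree/outdegree-$2$ constraints to surface a hole in $U(D)$, and, when $K_3\vee I_3$ appears only after contractions, tracking the last contracted edge via Lemma~\ref{lem:contraction hole} and exploiting chordality of the contracted graph to drive down the number of contractions. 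Once those lemmas are granted, the proof of Theorem~\ref{thm:planar} is immediate.
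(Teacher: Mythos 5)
Your proposal is correct and matches the paper's own proof: chordality is cited from Theorem~\ref{thm:(2,j) chordal}, and planarity follows by combining Corollary~\ref{cor:K_5-minor-free} with Theorem~\ref{thm:K_3,3-minor-free} (the paper leaves Wagner's criterion implicit, but that is exactly the step being invoked). Your remarks correctly identify that the real work lies in the supporting results, especially Lemma~\ref{lem:(K_3,I_3)-minor-free}.
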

\begin{proof}
Let $D$ be a $(2,2)$ digraph whose underlying graph is chordal.
Then, by Theorem~\ref{thm:(2,j) chordal}, $P(D)$ is chordal.
Furthermore, by Corollary~\ref{cor:K_5-minor-free} and Theorem~\ref{thm:K_3,3-minor-free}, $P(D)$ is planar.
\end{proof}

\begin{Cor}
A chordal graph one of whose orientations is a $(2, 2)$ digraph is planar.
\end{Cor}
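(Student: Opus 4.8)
The plan is to observe that this corollary is essentially a repackaging of Theorem~\ref{thm:planar}, so the work is really just to unwind the definitions. First I would note that if $D$ is an orientation of a graph $G$ and $D$ happens to be a $(2,2)$ digraph, then by the definition of ``orientation'' we have $U(D)=G$: the arc set of $D$ was obtained by directing each edge of $G$, so an unordered pair is an edge of $U(D)$ precisely when it is an edge of $G$. In particular $D$ is an acyclic digraph with indegree and outdegree bounded by $2$ whose underlying graph is $G$.

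Next I would invoke the hypothesis that $G$ is chordal. Since $U(D)=G$ is chordal, Theorem~\ref{thm:planar} applies to $D$ and tells us that $P(D)$ is (chordal and) planar. The only point to check is that the phylogeny graph contains $G$: by definition $V(P(D))=V(D)=V(G)$ and $E(P(D))=E(U(D))\cup E(C(D))\supseteq E(U(D))=E(G)$, so $G$ is a spanning subgraph of $P(D)$.

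Finally I would conclude using the fact that planarity is inherited by subgraphs: a subgraph of a planar graph is planar. Since $G$ is a subgraph of the planar graph $P(D)$, the graph $G$ is planar, which is exactly the assertion of the corollary.

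I do not expect any real obstacle here; all the substance has already been carried in Theorem~\ref{thm:planar} (and, beneath it, Corollary~\ref{cor:K_5-minor-free} and Theorem~\ref{thm:K_3,3-minor-free}). The corollary simply records that, read in the ``underlying graph'' direction, Theorem~\ref{thm:planar} says something about chordal graphs admitting a $(2,2)$ orientation rather than about phylogeny graphs of $(2,2)$ digraphs, and the translation between the two statements is immediate.
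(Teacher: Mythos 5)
Your argument is correct and matches the paper's own proof exactly: identify $U(D)=G$, apply Theorem~\ref{thm:planar} to conclude $P(D)$ is planar, and then use that $G$ is a (spanning) subgraph of $P(D)$ together with the fact that subgraphs of planar graphs are planar. No gaps, and no meaningful difference in approach.
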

\begin{proof}
Let $G$ be a chordal graph one of whose orientations, namely $D$, is a $(2, 2)$ digraph.
Then $U(D)$ is $G$ which is chordal.
Thus, by Theorem~\ref{thm:planar}, $P(D)$ is planar.
Since $U(D)$ is a subgraph of $P(D)$, $U(D)$ is planar.
\end{proof}

\end{document}